\newcommand{\Z}{{\mathbb Z}}
\newcommand{\N}{{\mathbb N}}
\newcommand{\R}{{\mathbb R}}
\newcommand{\Per}{\textrm{Per}}
\newtheorem{theorem}{Theorem}[section]
\newtheorem{corollary}[theorem]{Corollary}
\newtheorem{lemma}[theorem]{Lemma}
\newtheorem{proposition}[theorem]{Proposition}
\newtheorem{remark}[theorem]{Remark}
\newcommand{\eps}{\varepsilon}
\title{The isoperimetric problem for nonlocal perimeters}
\author[A. Cesaroni, M. Novaga]{}
\subjclass{ 53A10, 49Q20, 35R11.}
  \keywords{Fractional perimeter, nonlocal isoperimetric inequality, Poincar\'e inequality}
 \email{annalisa.cesaroni@unipd.it}
 \email{matteo.novaga@unipi.it}
\thanks{The authors were supported the Fondazione CaRiPaRo
Project ``Nonlinear Partial Differential Equations:
Asymptotic Problems and Mean-Field Games", Project PRA 2017 of the University of Pisa ``Problemi di ottimizzazione e di evoluzione in ambito variazionale",
 the INdAM-GNAMPA project ``Tecniche EDP, dinamiche e probabilistiche per lo studio di problemi asintotici"}
\begin{document}
\maketitle 

\centerline{\scshape Annalisa Cesaroni}
\medskip
{\footnotesize
% please put the address of the first author
 \centerline{Department of Statistical Sciences}
   \centerline{University of Padova}
   \centerline{Via Cesare Battisti 141, 35121 Padova, Italy}
} % Do not forget to end the {\footnotesize by the sign }

\medskip

\centerline{\scshape Matteo Novaga}
\medskip
{\footnotesize
 % please put the address of the second  and third author
 \centerline{ Department of Mathematics}
   \centerline{University of Pisa}
   \centerline{Largo Bruno Pontecorvo 5, 56127 Pisa, Italy}
}
\begin{abstract}
We consider a class of  nonlocal generalized perimeters which includes fractional perimeters and Riesz type potentials.
We prove a general isoperimetric inequality for such functionals, and we discuss some applications. In particular we prove existence of an isoperimetric profile, under suitable assumptions on the interaction kernel. 
\end{abstract}

\section{Introduction} 
In this paper we consider a family of geometric functionals, which in particular contains the fractional isotropic and anisotropic perimeter. More precisely, 
we define the  following energy defined on measurable subsets $E\subset\R^N$: 
\begin{equation}\label{f} 
\Per_K(E):= \int_E\int_{\R^N\setminus E} K(x-y)dxdy=\frac{1}{2} \int_{\R^N}\!\!\int_{\R^N} |\chi_E(x)-\chi_E(y)|K(x-y) dxdy
\end{equation}
where the kernel $K:\R^N\to [0, +\infty)$ satisfies the following assumptions: 
\begin{eqnarray}
\label{sim} & K(x)=K(-x)  \\
 \label{int} &\min(|x|, 1)\,K(x)\in L^1(\R^N).
 %\label{inf}   &\forall s>0, \ |\{K(x)>s\}|<+\infty,\\
 %\label{rad}& K(x)\geq k(|x|),
 \end{eqnarray} 
 %where $k:[0, +\infty)\to [0, +\infty)$ is monotocally decreasing. 

The functional \eqref{f} measures the interaction between points in $E$ and in $\R^N\setminus E$, weighted by the kernel $K$.  

Note that it is not restrictive to assume \eqref{sim} since 
$\Per_{  K}(E)=\Per_{\widetilde K}(E)$ for every $E$, where 
$\widetilde K(x):=(K(x)+K(-x))/2$. 
Notice also that, if $K\in L^1(\R^N)$, then for every $E$ with $|E|<\infty$, we have
\begin{equation}\label{e} \Per_K(E)= |E|\|K\|_{L^1(\R^N)} -\int_E\int_E K(x-y)dxdy.\end{equation}

In the first part of the paper we deal with isoperimetric inequalities for such functionals. The main result is the following (see Corollary \ref{coro}): 
if  $K(x)\ge \mu \chi_{B_r}(x)$ for some constants $\mu>0$ and $r>0$, then 
for all measurable  sets $E$ there holds
\[\Per_K(E) \ge
\min ( g(|E|),  g(|\R^N\setminus E|)),
\] 
where $g(m):=\Per_{K^\star}(B_m)$, with $K^\star$ 
the symmetric decreasing rearrangment of $K$, and $B_m$ the ball 
with volume $m$ centered at $0$. 
We discuss some property of the function $g$ and we provide a Poincar\'e type inequality
(see Proposition \ref{em}).

We  recall that, in the case of fractional perimeters,
sharp quantitative isoperimetric inequality, uniform with respect to the fractional exponent bounded away from $0$, have been obtained in \cite{ffmmm} (see also \cite{l} for an anisotropic version), whereas  Poincar\'e type inequalities have been discussed in \cite{m}.

An interesting related question is understanding which conditions on $K$ imply the  compact embedding of the functions
with bounded energy $J_K$ into $L^p$ spaces, for some $p\geq 1$. 

\smallskip

In the second part of the paper, we consider the isoperimetric problem 
\begin{equation}\label{isopb}
\min_{E:\ |E|=m} \Per_K(E),
\end{equation}
for a fixed volume $m>0$.

In the case of the  fractional perimeter, 
the existence of isoperimetric sets solving \eqref{isopb} has been studied in
\cite{crs, ffmmm} (see also \cite{cn} where a bulk term is added to the energy),
where it is shown that balls are the unique minimizers of the fractional perimeter among sets with the same volume.
In the general case, the same result holds if the kernel $K$ is a radially symmetric decreasing function, as a straightforward 
consequence of the Riesz rearrangement inequality \cite{riesz}. 
So, we focus on the case in which $K$ is not radially symmetric and decreasing.
We provide an existence result of  minimizers of the relaxed problem associated to \eqref{isopb} under the additional assumption that  $K\in L^1(\R^N)$ (see Theorem \ref{existence}). 
The proof is based on a concentration compactness type argument. 
Finally, we show that if $K$ has maximum at the origin (in an appropriate sense, see condition \eqref{pos}), then every minimizer of the relaxed problem is actually the characteristic function of a compact set (see Theorem \ref{existenceiso}). 

We are left with  the  open problem  of extending the existence result to more general interaction kernels satisfying only \eqref{int}. 

Another interesting problem is to consider  kernels which are just Radon measures on $\R^N$. In this case we don't expect in general compactness of minimizers.

\subsection*{Notation}
We  denote by $B_m(x)$ the ball  centered at $x$ with volume $m$,  that is, the ball with radius $r=m^{\frac{1}{N}}\omega_N^{-\frac{1}{N}}$, and
by $B_m$ for the ball centered at $0$ with volume $m$. We also denote by $B(x,r)$ the ball of center $x$ and radius $r$. 

For every measurable set $E\subseteq \R^N$, $\chi_E$ denotes the characteristic function of $E$, that is
the function which is $1$ on $E$ and $0$ outside.   

We recall that given a set $E$ with $|E|<\infty$, its symmetric rearrangement $E^\star$ is the ball $B_{|E|} $ that is the ball centered at $0$ with volume  $|E|$. 
Moreover the symmetric decreasing rearrangement of a nonnegative measurable  function $h$ with level sets of finite measure is defined as 
\[h^\star(x)= \int_0^{+\infty} \chi_{\{h>t\}^\star}(x) dt. \]
Note that if $h$ is radially symmetric and decreasing, then $h=h^\star$. 
Moreover, $h\in L^p(\R^n)$ if and only if $h^\star \in L^p(\R^n)$ with $\|h\|_{L^p(\R^n)}=\|h^\star\|_{L^p(\R^n)}$, for all $p\ge 1$. 

\section{Generalized fractional perimeters}
In this section we discuss some properties of the $K$ perimeters.

%Observe that $K(x)\geq (|x|\wedge 1)K(x)\in L^1(\R^N)$. So reasoning as above, for every $E$ with $|E|<\infty$, 
%we get
%\begin{equation}\label{conto2} 
%\Per_K(E)\geq  |E|\|(|x|\wedge 1)K\|_{L^1(\R^N)} -\int_E\int_E K(x-y)dxdy.
%\end{equation}

\begin{remark}\upshape \label{remarkfinit}  
Condition \eqref{int} implies that if $E$ is a  set with $|E|<\infty$ and $\mathcal{H}^{N-1}(\partial E)<\infty$,  then $\Per_K(E)<\infty$ (see  \cite[Remark 1.4]{csv}).
Indeed 
\begin{multline*} \Per_K(E)= \int_E\int_{\R^N\setminus E} K(x-y)dxdy\\=\int_{\R^N} |(E+x)\cap (\R^N\setminus E)| K(x)dx\leq C \int_{\R^N}( |x|\wedge 1)K(x)dx\end{multline*}
where $C$ is a constant which depends on $E$.  
\end{remark} 
 
\begin{proposition} \label{propper} 
The following properties hold:
\begin{enumerate}
\item $\Per_K(E)=\Per_K(\R^N\setminus E)$ and
\begin{equation}\label{sub} \Per_K(E\cap F)+\Per_K(E\cup F)\leq \Per_K(E)+\Per_K(F).\end{equation}
%\item for every $\Omega_1, \Omega_2$ open sets,
%\[\Per_K(E, \Omega_1)+\Per_K(E, \Omega_2)\leq \Per_K(E, \Omega_1\cup\Omega_2)+2\int_{\Omega_1}\int_{\Omega_2}K(x-y)dxdy.\]
\item $E\to \Per_K(E)$ is lower semicontinuous with respect to the $L^1_{loc}$-convergence. 
\end{enumerate} 
\end{proposition}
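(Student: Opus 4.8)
The plan is to deduce both items from elementary manipulations of the two equivalent formulas in \eqref{f}. For the symmetry identity $\Per_K(E)=\Per_K(\R^N\setminus E)$ one simply observes that $|\chi_E(x)-\chi_E(y)|=|\chi_{\R^N\setminus E}(x)-\chi_{\R^N\setminus E}(y)|$ pointwise, so the symmetric form of \eqref{f} is unchanged under $E\mapsto\R^N\setminus E$. For the submodularity inequality \eqref{sub} I would partition $\R^N$ into the four disjoint measurable sets
\[
A:=E\setminus F,\qquad B:=F\setminus E,\qquad C:=E\cap F,\qquad D:=\R^N\setminus(E\cup F),
\]
and, for disjoint sets $X,Y$, set $I(X,Y):=\int_X\int_Y K(x-y)\,dx\,dy$, noting that $I(X,Y)=I(Y,X)$ by \eqref{sim}. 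Using the first formula in \eqref{f} together with $E=A\cup C$, $F=B\cup C$, $E\cap F=C$, $E\cup F=A\cup B\cup C$ and the complementary identities $\R^N\setminus E=B\cup D$, $\R^N\setminus F=A\cup D$, $\R^N\setminus(E\cap F)=A\cup B\cup D$, and expanding each perimeter by additivity of $I$ over these pieces, a short bookkeeping yields
\[
\bigl(\Per_K(E)+\Per_K(F)\bigr)-\bigl(\Per_K(E\cap F)+\Per_K(E\cup F)\bigr)=2\,I(E\setminus F,F\setminus E)\ \ge\ 0,
\]
since $K\ge 0$; this proves \eqref{sub} with an explicit nonnegative defect. (When $\Per_K(E)$ or $\Per_K(F)$ is infinite the inequality is trivial, and when both are finite all the terms $I(\cdot,\cdot)$ occurring are bounded by these perimeters and hence finite, so the rearrangement of terms is legitimate.)

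For item (2), assume $\chi_{E_n}\to\chi_E$ in $L^1_{loc}(\R^N)$; if $\liminf_n\Per_K(E_n)=+\infty$ there is nothing to prove, so pass to a subsequence realizing $\liminf_n\Per_K(E_n)$ and then to a further subsequence along which $\chi_{E_n}\to\chi_E$ pointwise a.e.\ in $\R^N$. Since $K$ is finite a.e.\ by \eqref{int}, it follows that
\[
\bigl|\chi_{E_n}(x)-\chi_{E_n}(y)\bigr|\,K(x-y)\ \longrightarrow\ \bigl|\chi_E(x)-\chi_E(y)\bigr|\,K(x-y)
\]
for a.e.\ $(x,y)\in\R^N\times\R^N$. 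As the integrands are nonnegative, Fatou's lemma on the product space $\R^N\times\R^N$ gives
\[
\Per_K(E)=\frac12\int_{\R^N}\!\!\int_{\R^N}\bigl|\chi_E(x)-\chi_E(y)\bigr|K(x-y)\,dx\,dy\le\liminf_n\frac12\int_{\R^N}\!\!\int_{\R^N}\bigl|\chi_{E_n}(x)-\chi_{E_n}(y)\bigr|K(x-y)\,dx\,dy=\liminf_n\Per_K(E_n),
\]
which is the claimed lower semicontinuity.

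Both arguments are essentially computational: the only points needing any care are the extraction of an a.e.-convergent subsequence from $L^1_{loc}$-convergence and the correct application of Fatou on $\R^N\times\R^N$ in item (2), and the patient tracking of the interaction terms $I(\cdot,\cdot)$ in item (1). I do not expect any genuine obstacle beyond this bookkeeping.
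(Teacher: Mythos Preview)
Your proposal is correct and follows essentially the same approach as the paper. In particular, for \eqref{sub} the paper also derives the exact identity
\[
\Per_K(E)+\Per_K(F)-\Per_K(E\cap F)-\Per_K(E\cup F)=2\int_{E\setminus F}\int_{F\setminus E}K(x-y)\,dx\,dy,
\]
obtained by the same bookkeeping you describe (your four-set partition just organizes it more cleanly), and for item (2) the paper likewise invokes Fatou's lemma on the symmetric integral formula; your explicit passage to an a.e.-convergent subsequence is a welcome clarification but not a different idea.
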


\begin{proof}
We start by proving 1. 
The first equality is a direct consequence of the definition of $\Per_K$.
In order to prove \eqref{sub}, we
 %for $\Omega=\R^N$. The general case is similar, but more involved.
observe that 
 \begin{multline*} \int_{E\cup F}\int_{\R^N\setminus (E\cup F)}\\= \int_{E}\int_{\R^N\setminus E} +\int_{F}\int_{\R^N\setminus F}-\int_{E\cap F}\int_{\R^N\setminus (E\cup F)} -
 \int_E \int_{F\setminus(E\cap F)}-\int_F\int_{E\setminus (E\cap F)}. \end{multline*}
 and 
 \begin{multline*} \int_{E\cap F}\int_{\R^N\setminus (E\cap F)}\\= \int_{E\cap F}\int_{\R^N\setminus (E\cup F)}+\int_{E\cap F}\int_{E\setminus (F\cap E)}+\int_{E\cap F}\int_{F\setminus (F\cap E)}. \end{multline*}
 Therefore 
 \begin{multline*}\Per_K(E\cap F)+\Per_K(E\cup F)=\Per_K(E)+\Per_K(F)\\
 -2\int_{E\setminus (E\cap F)}\int_{F\setminus (E\cap F)} K(x-y)dxdy, \end{multline*}
which gives \eqref{sub}.
 
 The proof of 2. 
 %is given in \cite[Lemma 2.4]{dnrv}.
is a consequence of Fatou lemma, observing that 
$\Per_K(E)=\int_{\R^N}\!\!\int_{\R^N} |\chi_E(x)-\chi_E(y)|K(x-y)dxdy$. 
\end{proof} 

\subsection{Examples} 
A first class of examples is given by the kernels $K(x)$ which satisfies 
\[  \lambda \frac{1}{|x|^{N+s}}\leq K(x)\leq \Lambda \frac{1}{|x|^{N+s}},\] 
for some $s\in (0,1)$ and $0<\lambda\leq \Lambda$. This class includes the fractional perimeters, and its  inhomogeneous and anisotropic versions. 

The fractional perimeter, which has been introduced in \cite{m} and further developed 
in \cite{crs}, is defined as  \begin{equation}\label{fractionalp}
P_s(E):=\int_{E}\int_{\R^N\setminus E} \frac{1}{|x-y|^{N+s}}\,dx\,dy,  
\end{equation} 
for $s\in (0,1)$. It is also possible to substitute the kernel  $ \frac{1}{ |x - y|^{N+s}}$ with more general heterogeneous, isotropic kernels of the type
\[K(x)= \frac{a(x)}{ |x |^{N+s}}\] 
where $a:\R^N \to (0, +\infty)$ is a measurable function such that $0<\lambda\leq a(x)\leq \Lambda$. 
The  anisoptropic  fractional perimeters have been defined in \cite{l} as follows:
let $B\subseteq \R^N$ be a convex set which is symmetric with respect to the origin and let 
$|\cdot|_B$ the norm in $\R^N$  with unitary  ball $B$, then we define 
\begin{equation}\label{fractionalpaniso}
P_{s,B}(E):=\int_{E}\int_{\R^N\setminus E} \frac{1}{|x-y|_B^{N+s}}\,dx\,dy.
\end{equation} 

Another class of examples, relevant for this paper, is given by the kernels $K(x)\in L^1(\R^N)$,
for which the representation formula \eqref{e} holds.

\subsection{Coarea formula} 
%Let $\Omega$ be an open set. 
We introduce the following functional on functions $u\in L^1_{loc}(\R^N)$:
\begin{equation}\label{funz1}
J_K(u)=\frac 12 \int_{\R^N}\!\!\int_{\R^N} |u(x)-u(y)|K(x-y)dxdy.
\end{equation} 
%If $\Omega=\R^N$, then we denote $J_K(\cdot, \R^N)=J_K(\cdot)$.
Note that $J_K(\chi_E)=\Per_K(E)$ for all measurable $E\subset \R^N$.
%\[J_K(\chi_E, \Omega)=\frac{1}{2}\int_{E\cap \Omega}\int_{(\R^N\setminus E)\cap\Omega} |u(x)-u(y)|K(x-y)dxdy\leq \Per_K(E, \Omega),
%\]
%with equality if $\Omega=\R^N$. 

We provide a coarea formula, linking  the functional $\Per_K$ to $J_K$.
\begin{proposition}[Coarea formula]   
The following formula holds
\begin{equation}\label{coarea} J_K(u)= \int_{-\infty}^{+\infty} \Per_K(\{u>s\})ds.\end{equation} 

\end{proposition}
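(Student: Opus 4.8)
The plan is to reduce the coarea formula for $J_K$ to the classical one-dimensional layer-cake identity applied pointwise. The key observation is that for any two real numbers $a,b$, one has
\[
|a-b| = \int_{-\infty}^{+\infty} \bigl|\chi_{\{a>s\}} - \chi_{\{b>s\}}\bigr|\,ds,
\]
which is elementary: the integrand is $1$ precisely for $s$ strictly between $\min(a,b)$ and $\max(a,b)$, and $0$ otherwise. Applying this with $a=u(x)$, $b=u(y)$ gives, for every pair $(x,y)$,
\[
|u(x)-u(y)|\,K(x-y) = \int_{-\infty}^{+\infty} \bigl|\chi_{\{u>s\}}(x) - \chi_{\{u>s\}}(y)\bigr|\,K(x-y)\,ds.
\]

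The main step is then to integrate this identity over $\R^N\times\R^N$ and exchange the order of integration with the $s$-integral. Since the integrand is nonnegative and measurable jointly in $(x,y,s)$ — the map $(x,y,s)\mapsto \chi_{\{u>s\}}(x)$ is measurable because $\{(x,s): u(x)>s\}$ is a measurable subset of $\R^N\times\R$ whenever $u$ is measurable — Tonelli's theorem applies and justifies the interchange with no integrability hypothesis needed. This yields
\[
2 J_K(u) = \int_{\R^N}\!\!\int_{\R^N} |u(x)-u(y)|K(x-y)\,dx\,dy
= \int_{-\infty}^{+\infty}\left(\int_{\R^N}\!\!\int_{\R^N} \bigl|\chi_{\{u>s\}}(x)-\chi_{\{u>s\}}(y)\bigr|K(x-y)\,dx\,dy\right)ds,
\]
and recognizing the inner double integral as $2\,\Per_K(\{u>s\})$ via the symmetric form of the perimeter in \eqref{f} gives \eqref{coarea} after dividing by $2$.

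I do not expect any serious obstacle here: the proof is essentially a one-line pointwise identity plus Tonelli. The only point requiring a word of care is the joint measurability of $(x,s)\mapsto \chi_{\{u>s\}}(x)$, which follows from the measurability of $u$, and the fact that the identity holds as an equality in $[0,+\infty]$ — both sides may be infinite simultaneously, and the statement remains valid in that case. If one prefers, one can first verify the formula for simple functions $u$ and pass to the limit by monotone convergence, but invoking Tonelli directly is cleaner. I would present the pointwise identity as a displayed lemma-free computation, then integrate and conclude.
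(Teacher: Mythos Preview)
Your proof is correct and follows essentially the same route as the paper: the pointwise layer-cake identity $|u(x)-u(y)|=\int_{-\infty}^{+\infty}|\chi_{\{u>s\}}(x)-\chi_{\{u>s\}}(y)|\,ds$, followed by Tonelli to swap the $s$-integral with the double integral in $(x,y)$, and identification of the inner integral with $2\,\Per_K(\{u>s\})$. The only cosmetic difference is that the paper splits $|\chi_{\{u>s\}}(x)-\chi_{\{u>s\}}(y)|$ into the two cross-products and invokes the symmetry \eqref{sim} of $K$ explicitly, whereas you appeal directly to the symmetric form of the perimeter in \eqref{f}; these are equivalent.
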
 
\begin{proof} 
First of all we observe that, for every measurable function $u$, 
\[|u(x)-u(y)|=\int_{-\infty}^{+\infty} |\chi_{\{u>s\}}(x)-\chi_{\{u>s\}}(y)|ds.\]
Moreover for every $s\in\R$
\[ |\chi_{\{u>s\}}(x)-\chi_{\{u>s\}}(y)|=\chi_{\{u>s\}}(x)\chi_{\R^N\setminus\{u>s\}}(y)+\chi_{\{u>s\}}(y)\chi_{\R^N\setminus\{u>s\}}(x).
\]
Therefore we get, recalling \eqref{sim},  and using Tonelli theorem,
\begin{eqnarray*}2J_K(u)&=& \int_{\R^N}\!\!\int_{\R^N} |u(x)-u(y)|K(x-y)dxdy\\&=&2\int_{\R^N}\!\!\int_{\R^N}\int_{-\infty}^{+\infty} \chi_{\{u>s\}}(x)\chi_{\R^N\setminus\{u>s\}}(y)K(x-y)ds dxdy
\\ &=& 2 \int_{-\infty}^{+\infty} \int_{\{u>s\}}\int_{\R^N\setminus \{u>s\}} K(x-y)dxdyds\\ &=&   2 \int_{-\infty}^{+\infty} \Per_K(\{u>s\})ds.\end{eqnarray*}
\end{proof} 

\section{Isoperimetric inequality}
In this section we prove an isoperimetric inequality for generalized nonlocal perimeters. 
\begin{proposition}\label{isoperprop}  
For every measurable set  $E\subseteq \R^N$ such that $|E|<\infty$,  there holds
\[\Per_K(E)\geq \Per_{K^*}(B_{|E|}), \]
where $K^*$ is the symmetric decreasing rearrangement of $K$.

In particular, if $K$ is radially symmetric and decreasing, then 
 \[\Per_K(E)\geq \Per_{K}(B_{|E|}). \]
 Moreover, equality holds if and only if $E$ is a translated of $B_{|E|}$.
\end{proposition}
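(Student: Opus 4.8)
The plan is to reduce everything to the Riesz rearrangement inequality applied to the representation of $\Per_K$ as a double integral. First I would use the identity
\[
\Per_K(E)=\frac12\int_{\R^N}\!\!\int_{\R^N}|\chi_E(x)-\chi_E(y)|K(x-y)\,dx\,dy
=\frac12\int_{\R^N}\!\!\int_{\R^N}\bigl(\chi_E(x)+\chi_E(y)-2\chi_E(x)\chi_E(y)\bigr)K(x-y)\,dx\,dy,
\]
which, for $|E|<\infty$, is only directly meaningful when $K\in L^1$; to cover the general case under \eqref{int} one should truncate $K$ by $K_R:=\min(K,R)\chi_{B_R}$, prove the inequality for each $K_R$, and pass to the limit by monotone convergence (noting $K_R^\star\nearrow K^\star$ pointwise and using the coarea/monotonicity to pass the rearranged side to the limit as well). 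So the core estimate is: for $K\in L^1$,
\[
\Per_K(E)=|E|\,\|K\|_{L^1}-\int_E\int_E K(x-y)\,dx\,dy,
\]
which is exactly \eqref{e}.

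Given \eqref{e}, the inequality $\Per_K(E)\ge \Per_{K^\star}(B_{|E|})$ is equivalent to
\[
\int_E\int_E K(x-y)\,dx\,dy\le \int_{B_{|E|}}\int_{B_{|E|}}K^\star(x-y)\,dx\,dy,
\]
together with the fact $\|K\|_{L^1}=\|K^\star\|_{L^1}$ and $|E|=|B_{|E|}|$. Writing the left side as $\int\!\int \chi_E(x)\,K(x-y)\,\chi_E(y)\,dx\,dy$ and noting $\chi_E^\star=\chi_{E^\star}=\chi_{B_{|E|}}$ and $(K)^\star=K^\star$ (the rearrangement of $x\mapsto K(x)$ viewed as a function of the difference variable is just $K^\star$, since $|x|$-symmetrization commutes with the evaluation at $x-y$), this is precisely the Riesz rearrangement inequality \cite{riesz}:
\[
\int_{\R^N}\!\!\int_{\R^N} f(x)\,h(x-y)\,g(y)\,dx\,dy\le \int_{\R^N}\!\!\int_{\R^N} f^\star(x)\,h^\star(x-y)\,g^\star(y)\,dx\,dy
\]
with $f=g=\chi_E$ and $h=K$. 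This yields the first claim. When $K$ is already radially symmetric and decreasing, $K=K^\star$, giving $\Per_K(E)\ge \Per_K(B_{|E|})$ immediately.

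For the equality statement, I would invoke the characterization of equality in the Riesz inequality due to Burchard: if $h=K^\star$ is strictly radially decreasing (which, for a general $K$ radially symmetric decreasing, one may first need to handle; but in the stated case one assumes $K$ radial decreasing and equality forces $K=K^\star$ to be used), then equality in
\[
\int\!\int \chi_E(x)K(x-y)\chi_E(y)\le \int\!\int \chi_{B_{|E|}}(x)K(x-y)\chi_{B_{|E|}}(y)
\]
holds only when $E$ coincides, up to a translation and a null set, with $B_{|E|}$. The main obstacle is exactly this rigidity step: the general equality case of the Riesz inequality is delicate, and one must be careful about degeneracies (e.g. $K$ radial but not strictly decreasing, or $K$ with a large vanishing region); a clean statement likely requires assuming, as the corollary's hypothesis $K(x)\ge\mu\chi_{B_r}$ suggests, enough positivity near the origin, or directly citing Burchard's theorem \cite{riesz}-type rigidity under the stated radial-decreasing assumption. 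I would also need to verify that the truncation argument does not spoil the equality case — typically one argues that if $\Per_K(E)=\Per_K(B_{|E|})$ then $\Per_{K_R}(E)=\Per_{K_R}(B_{|E|})$ for all large $R$ (using monotone convergence and the already-established inequality termwise), and then applies rigidity at a level $R$ where $K_R$ is strictly radially decreasing on a ball of radius comparable to $|E|^{1/N}$.
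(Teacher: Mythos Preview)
Your proposal is correct and follows essentially the same route as the paper: reduce to $K\in L^1$ via \eqref{e}, apply the Riesz rearrangement inequality to $\int_E\int_E K(x-y)\,dx\,dy$, and handle general $K$ by monotone truncation (the paper uses $K_\eps=K\wedge\frac{1}{\eps}$ rather than your $K_R=\min(K,R)\chi_{B_R}$, but this is immaterial). Your discussion of the equality case is in fact more careful than the paper's, which simply invokes the rigidity statement for the Riesz inequality without further comment.
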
 
\begin{proof} 
First of all we consider the case in which $K\in L^1(\R^N)$. 
Note that $(\chi_E)^\star=\chi_{B_{|E|}}$. 
By Riesz rearrangement inequality \cite{riesz}, 
we get that
\begin{multline*} \int_E\int_E K(x-y)dxdy=\int_{\R^N} \chi_E(x) (\chi_E* K)(x)dx\\ 
\leq \int_{\R^N}\chi_{B_{|E|}}(x) (\chi_{B_{|E|}}* K^\star)(x)dx=\int_{B_{|E|}}\int_{B_{|E|}} K^\star(x-y).  \end{multline*}
So, recalling \eqref{e} we get the conclusion.

Finally, if $K=K^\star$, we have that equality in the Riesz rearrangement inequality holds if and only if 
$\chi_E$ is equal, up to translation, to its symmetric-decreasing rearrangement, therefore if and only if $E$ is equal, up to translation, to $B_{|E|}$. 

Now if $K\not\in L^1(\R^N)$, we define $K_\eps(x)=K(x)\wedge \frac{1}{\eps}$. Then $K_\eps\in L^1(\R^N)$ and $K_\eps$ converges to $K$ monotonically increasing. 
 Note that \[K_\eps^\star(x)=\int_0^{\frac{1}{\eps}} \chi_{\{K >t\}^\star}(x) dt.\] 
 So as $\eps\to 0$, also  $K_\eps^\star\to K^\star$ monotonically increasing.
Therefore 
by the monotone convergence theorem if $E$ is a measurable set we get that 
\[\lim_{\eps\to 0} \Per_{K_\eps}(E)=\Per_K(E)\qquad \lim_{\eps\to 0} \Per_{K_\eps^\star}(E)=\Per_{K^\star}(E) . \]

By the previous argument we get $\Per_{K_\eps}(E)\geq \Per_{K_\eps^\star}(B_{|E|})$.
So, we conclude sending $\eps\to 0$. 
 \end{proof} 

For every $m\geq 0$, we define  
\[g(m):= \Per_{K^\star}(B_m)\]  where we recall that $B_m$ is the ball centered at $0$ with volume $m$. 
We also set $g(+\infty)=+\infty$. 

We provide some estimates on the function $g$. 

\begin{lemma} \label{propg} \ \ \ 
\begin{enumerate}
% \item For all $M>0$, there exists $C_M>0$, such that $C_M$ is decreasing in $M$ and $C_M\to 0$ as $M\to +\infty$ such that
%\begin{equation} \label{stima1}
% g(m)\geq C_M m, \qquad \forall m\leq M. 
%\end{equation} 
%Moreover as $M\to  0$,  $C_M\to +\infty$ if $K\not\in L^1$ and $C_M\to  \|K\|_{L^1}$ if $K\in L^1$. 
%\item For all $M>0$,  \begin{equation} \label{stima2}
% g(m)\leq   m\|(|x|\wedge 1)K\|_{L^1}\left[1\vee 
%\frac{\omega_N^{1/N}}{2 M^{1/N}}\right], \qquad \forall m\geq M. 
%\end{equation} 

\item If $ K\not\in L^1(\R^N)$, then \begin{equation}\label{notl1}  
\lim_{m\to 0^+}\frac{g(m)}{m}=+\infty.\end{equation}
\item If  $K\in L^1(\R^N)$, then 
\begin{equation}\label{l1} g(m)\leq  \|K\|_{L^1} m,\qquad 
\lim_{m\to 0^+}\frac{g(m)}{m}= \|K\|_{L^1}.\end{equation}
\end{enumerate} 
\end{lemma}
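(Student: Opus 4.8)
The plan is to establish both parts of Lemma~\ref{propg} by reducing everything to the radially symmetric decreasing kernel $K^\star$, since $g(m) = \Per_{K^\star}(B_m)$ depends only on $K^\star$ and, by the rearrangement properties recalled in the introduction, $K \in L^1$ iff $K^\star \in L^1$ with equal norms. So without loss of generality we may assume $K = K^\star$ is radially symmetric and decreasing. The key computational tool is the representation, valid when $K \in L^1$, of formula~\eqref{e}: for a ball,
\[
g(m) = \Per_K(B_m) = m\,\|K\|_{L^1} - \int_{B_m}\int_{B_m} K(x-y)\,dx\,dy,
\]
so that $g(m)/m = \|K\|_{L^1} - \frac{1}{m}\int_{B_m}\int_{B_m} K(x-y)\,dx\,dy$. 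For part (2), the inequality $g(m) \le \|K\|_{L^1} m$ is then immediate since the double integral is nonnegative. For the limit, I would show that $\frac{1}{m}\int_{B_m}\int_{B_m} K(x-y)\,dx\,dy \to 0$ as $m \to 0^+$: bound it by $\frac{1}{m}\int_{B_m}\left(\int_{\R^N} K(z)\,dz\right)dx = \|K\|_{L^1}$ only gives boundedness, so instead one writes $\int_{B_m}\int_{B_m} K(x-y)\,dx\,dy = \int_{\R^N} K(z)\,|B_m \cap (B_m + z)|\,dz \le \int_{\R^N} K(z)\,\min(m, c_N |z|^N\cdot\text{(something)})$; more simply, $|B_m\cap(B_m+z)| \le m$ and $|B_m\cap(B_m+z)| = 0$ when $|z| \ge 2 r_m$ with $r_m = (m/\omega_N)^{1/N} \to 0$, so by dominated convergence $\frac{1}{m}\int_{\R^N} K(z)\,|B_m\cap(B_m+z)|\,dz \le \int_{|z| < 2r_m} K(z)\,dz \to 0$. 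This yields $\lim_{m\to 0^+} g(m)/m = \|K\|_{L^1}$.

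For part (1), when $K \notin L^1(\R^N)$ the representation \eqref{e} is unavailable, so I would argue directly. Since $K = K^\star$ is radially decreasing and not integrable, the non-integrability must come from the singularity at the origin (the tail is always controlled: by \eqref{int}, $K(x) = K(x)\min(|x|,1)\cdot\frac{1}{\min(|x|,1)}$ is integrable away from $0$). The idea is to compare $g(m)$ with $g$ of a truncated kernel: set $K_\eps = K \wedge \frac{1}{\eps} \in L^1$, so by part (2) applied to $K_\eps$ (whose symmetric rearrangement is itself, being already radial decreasing) we have $\lim_{m\to 0^+} g_\eps(m)/m = \|K_\eps\|_{L^1}$ where $g_\eps(m) = \Per_{K_\eps}(B_m)$. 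Since $K \ge K_\eps$ pointwise, $\Per_K(B_m) \ge \Per_{K_\eps}(B_m)$, hence
\[
\liminf_{m\to 0^+} \frac{g(m)}{m} \ge \liminf_{m\to 0^+} \frac{g_\eps(m)}{m} = \|K_\eps\|_{L^1}.
\]
As $\eps \to 0$, $\|K_\eps\|_{L^1} \to \|K\|_{L^1} = +\infty$ by monotone convergence, so $\liminf_{m\to 0^+} g(m)/m = +\infty$, which is \eqref{notl1}.

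The main obstacle, or rather the one point requiring a little care, is justifying in part (2) that the "self-interaction" double integral over $B_m$ contributes a lower-order term than $m$; the clean way is the substitution $\int_{B_m}\int_{B_m} K(x-y)\,dx\,dy = \int_{\R^N} K(z)\,|B_m\cap(B_m+z)|\,dz$ followed by the observation that the integrand vanishes for $|z|\ge 2r_m$ and is dominated by $m\,K(z)\chi_{B_{2r_m}}(z)$, so dividing by $m$ gives something bounded by $\int_{B_{2r_m}}K(z)\,dz$, which tends to $0$ as $m\to 0$ precisely because $K\in L^1$. Everything else — the reduction to radial $K$, the monotonicity in part (1), the elementary inequality $g(m)\le\|K\|_{L^1}m$ — is routine. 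One should also note the trivial edge cases $g(0)=0$ and monotonicity of $g$, which follow from \eqref{sub} or directly, but are not strictly needed for the statement.
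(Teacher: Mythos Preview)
Your argument is correct. For Part~(2) it is essentially the paper's computation written in different notation: the paper introduces the averaged kernel $K_m^\star(x):=\tfrac{1}{m}\int_{B_m(x)}K^\star(y)\,dy$ and shows $g(m)/m=\int_{\R^N\setminus B_m}K_m^\star(x)\,dx$, which (using $\|K_m^\star\|_{L^1}=\|K\|_{L^1}$) is exactly your formula $g(m)/m=\|K\|_{L^1}-\tfrac{1}{m}\int_{B_m}\int_{B_m}K^\star(x-y)\,dx\,dy$; your bound $\tfrac{1}{m}\int_{B_m}\int_{B_m}K^\star \le \int_{|z|<2r_m}K^\star\to 0$ is the same estimate the paper would use on $\int_{B_m}K_m^\star$.

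The genuine difference is in Part~(1). The paper keeps the single identity $g(m)/m=\int_{\R^N\setminus B_m}K_m^\star$ and lets $m\to 0$ directly (Fatou, using $K_m^\star\to K^\star$ a.e.\ and $\int K^\star=+\infty$). You instead truncate $K_\eps=K\wedge\tfrac{1}{\eps}$, apply Part~(2) to get $\liminf_{m\to 0}g(m)/m\ge\|K_\eps\|_{L^1}$, and then send $\eps\to 0$. Both work; the paper's route is more unified (one formula covers both cases), while your truncation is perhaps more transparent since it avoids any pointwise convergence argument for the averaged kernel and reuses Part~(2) verbatim.
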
 

\begin{proof}
%It is easy to check, by changing variables and using Fubini theorem as in Remark \ref{}, that 
%\[ g(m)=\int_{B_r}\int_{\R^n\setminus B_r} K^\star(x-y)dxdy=\int_{\R^N} |B_r(x)\cap (\R^N\setminus B_r)| K^\star(x)dx.\]
%Observe that \[\lambda(x):= |B_r(x)\cap(\R^N\setminus  B_r)|\] is a monotone increasing, radially symmetric function, such that $\lambda(0)=0$ and $\lambda(x)=m$ 
%for all  $x$ with $|x|\geq 2r$. Moreover, for $|x|\leq 2r$,  $\lambda(x)\leq |x| N\omega_N r^{N-1}=2N\frac{|x|}{2r} m$.
%So in conclusion  $\lambda(x)\leq 2N\left(\frac{|x|}{2r}\wedge 1\right) m$. 
%Therefore  \begin{eqnarray}\label{stbasso}   g(m)&=&\int_{\R^N} \lambda(x) K^\star(x)dx\leq 2Nm\left[
%\int_{B_{2r}} \frac{|x|}{2r} K^\star(x)dx+
% \int_{ \R^N\setminus B_{2r}} K^\star(x)dx\right]\\
%\label{stalto} g(m)&=&\int_{\R^N} \lambda(x) K^\star(x)dx
%\geq m\int_{ \R^N\setminus B_{2r}} K^\star(x)dx.\end{eqnarray}
%Let $r\geq \frac{1}{2}$, then $m\geq \frac{\omega_N}{2^N}$. Then by \eqref{stbasso}, 
%we get, observing that $(|x|\wedge 1) K^\star=((|x|\wedge 1)K)^\star$ and that $\|(|x|\wedge 1) K^\star\|_{L^1}=\|(|x|\wedge 1) K\|_{L^1}$, 
%\[ g(m)\leq m N \|K(|x|\wedge 1)\|_{L^1}.\]
%Let $M>0$ and  $R= \left(\frac{M}{\omega_N}\right)^{\frac{1}{N}}$. 
%If $m\geq M$, then $r\geq R$ and from \eqref{stbasso} we get
%
%Then if $m\leq M$, then $r\leq R$ and we conclude by \eqref{stalto} that \eqref{stima1} holds, 
%with \[C_M=\int_{ \R^N\setminus B_{2M}} K^\star(x)dx. \] 
%
 
 For $x\in\R^N$ and $m>0$ we define 
\[  K_m^\star(x) = \frac{1}{m} \int_{B_m(x)} K^\star(y)dy.\]
We have
\begin{eqnarray*}
\int_{\R^N} K^\star_m(x)dx&=&\frac{1}{m} \int_{\R^N} \int_{B_m} K^\star(x+y)dydx
\\
&=&
\frac{1}{m} \int_{B_m} \int_{\R^N} K^\star(x)dxdy = 
 \int_{\R^N} K^\star(x)dx.\end{eqnarray*}
In particular   $K^\star \in L^1(\R^N)$  if and only if $K_m^\star\in L^1(\R^N)$ and
$\|K^\star\|_{L^1}=\|K_m^\star\|_{L^1}$. We recall  that $\|K^\star\|_{L^1}=\|K\|_{L^1}$.

We then compute
\[\Per_{K^\star}(B_m)=\int_{B_m}\int_{\R^N\setminus B_m} K^\star(x-y)dxdy
= m \int_{\R^N\setminus B_m} K^\star_m(x)dx,\]
which gives  \eqref{notl1} and \eqref{l1}, sending $m\to 0$.
\end{proof}

\begin{proposition}\label{probono}
Assume that the kernel $K$ satisfies the following condition:
\begin{equation}\label{kappazero}
\text{there exist $\mu,r>0$ such that }K(x)\ge\mu \ \text{for all }x\in B(0,r).
\end{equation}
Let $E$ be a measurable set such that $\Per_K(E)<\infty$, then $|E|<\infty$ or 
$|\R^N\setminus E|<\infty$.
\end{proposition}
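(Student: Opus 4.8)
The plan is to forget the fine structure of $K$ and use only the lower bound furnished by \eqref{kappazero}. Set $V:=|B(0,r)|$ and, for $y\in\R^N$, let $\psi(y):=|(\R^N\setminus E)\cap B(y,r)|=\big(\chi_{\R^N\setminus E}*\chi_{B(0,r)}\big)(y)\in[0,V]$. Since $K\ge\mu\,\chi_{B(0,r)}$ we get $\Per_K(E)\ge\mu\int_E\psi\,dy$, and applying the same inequality to $\R^N\setminus E$ together with $\Per_K(E)=\Per_K(\R^N\setminus E)$ (Proposition \ref{propper}) also $\Per_K(E)\ge\mu\int_{\R^N\setminus E}(V-\psi)\,dy$. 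Hence $\Per_K(E)<\infty$ forces $\int_E\psi\,dy<\infty$ and $\int_{\R^N\setminus E}(V-\psi)\,dy<\infty$. I would also record at the outset that $\psi$ is Lipschitz, with a constant $L=L(N,r)$ depending only on $N$ and $r$, because $|\psi(y)-\psi(y')|\le|B(y,r)\,\triangle\,B(y',r)|\le L\,|y-y'|$.

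Now assume, towards a contradiction, that $|E|=\infty$ and $|\R^N\setminus E|=\infty$. From $\int_E\psi<\infty$ one gets $|E\cap\{\psi\ge V/4\}|<\infty$, and from $\int_{\R^N\setminus E}(V-\psi)<\infty$ one gets $|(\R^N\setminus E)\cap\{\psi\le 3V/4\}|<\infty$; adding these, the ``transition layer'' $T:=\{V/4\le\psi\le 3V/4\}$ has $|T|<\infty$. On the other hand, since $|E|=\infty$ while $|E\cap\{\psi\ge V/4\}|<\infty$, necessarily $|\{\psi<V/4\}|=\infty$, and symmetrically $|\{\psi>3V/4\}|=\infty$.

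The heart of the argument — and the step I expect to be the main obstacle — is to deduce that the level set $\Sigma:=\{\psi=V/2\}$ is bounded. (One cannot simply argue that $\int_E\psi<\infty$ makes $E$ essentially ``solid'': bounding $\Per_K(E)$ below by interactions inside a fixed cubical grid gives $0$ for comb-like sets, so the Lipschitz regularity of $\psi$ must be used.) By the Lipschitz bound, the $\tfrac{V}{8L}$-neighbourhood of $\Sigma$ is contained in $T$; hence a maximal $\tfrac{V}{8L}$-separated subset of $\Sigma$ is finite (the disjoint balls of radius $\tfrac{V}{16L}$ about its points all lie in the finite-measure set $T$), and by maximality it covers $\Sigma$ by balls of radius $\tfrac{V}{8L}$, so $\Sigma$ is bounded, say $\Sigma\subseteq\overline{B(0,R_0)}$. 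Because $N\ge 2$, the set $\R^N\setminus\overline{B(0,R_0)}$ is connected; on it $\psi$ is continuous and never equal to $V/2$, so $\psi>V/2$ throughout it, or $\psi<V/2$ throughout it. In the first case $\{\psi<V/4\}\subseteq\overline{B(0,R_0)}$ has finite measure, in the second case $\{\psi>3V/4\}\subseteq\overline{B(0,R_0)}$ has finite measure; either way this contradicts the previous paragraph, so $|E|<\infty$ or $|\R^N\setminus E|<\infty$. The hypothesis $N\ge 2$ enters in an essential way here: for $N=1$ the statement fails, for instance when $E=(-\infty,0)$ and $K$ decays fast enough that $\int_\R|x|\,K(x)\,dx<\infty$ (then $\Per_K(E)=\int_0^\infty xK(x)\,dx<\infty$ while $|E|=|\R\setminus E|=\infty$).
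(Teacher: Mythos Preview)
Your argument is correct for $N\ge 2$ and takes a genuinely different route from the paper. The paper partitions $\R^N$ into cubes $Q_i$ of diameter $r$ and runs a case analysis on $\limsup_i|E\cap Q_i|$; in the interesting case it asserts ``by continuity'' that infinitely many cubes satisfy $\delta|Q_i|\le |E\cap Q_i|\le(1-\delta)|Q_i|$, each then contributing at least $\mu\delta^2|Q_i|^2$ to $\Per_K(E)$. You instead work with the continuous density $\psi=\chi_{\R^N\setminus E}*\chi_{B(0,r)}$, use its Lipschitz regularity to show that the level set $\{\psi=V/2\}$ is bounded, and finish via the connectedness of $\R^N\setminus\overline{B(0,R_0)}$. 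The payoff of your approach is that it exposes the dimensional restriction: your half-line counterexample (e.g.\ $K(x)=e^{-x^2}$, $E=(-\infty,0)$, giving $\Per_K(E)=\int_0^\infty x\,e^{-x^2}\,dx=\tfrac12$) shows the proposition as stated is \emph{false} for $N=1$, and this is exactly where the paper's ``by continuity'' step is not justified --- for $E=(-\infty,0)$ at most one cube of the partition can be intermediate, so no infinite family $\bar Q_i$ exists. For $N\ge 2$ both arguments can be made to work (the paper's Case~3 can be completed by a lattice-connectedness argument producing infinitely many adjacent full/empty cube pairs, once the cubes are taken small enough that $K\ge\mu$ across neighbours), but your continuous version isolates the topological obstruction more transparently.
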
 

\begin{proof}
Let $\{Q_i\}_{i\in \mathbb N}$ be a partition of $\R^N$ made of cubes of sidelength $r/\sqrt{n}$, where $r$ is as in \eqref{kappazero}.
Note that for all $x,y\in Q_i$ we have $K(x-y)\ge \mu$.

Assume by contradiction that $|E|=|\R^N\setminus E|=\infty$. Then three possible cases may verify: either there exists $\delta>0$ such that 
$ \limsup_{i}|E\cap Q_i|\leq (1-\delta) |Q_i|$ or $ \limsup_{i}|  Q_i\setminus E|\leq (1-\delta) |Q_i|$ or there exist two  subsequences  $Q_{i_n},  Q_{j_n}$ such that  $\lim_{n}|Q_{i_n}\cap  E|= \lim_{n}| Q_{j_n}\setminus E|= |Q_i|$. 

{\it Case 1}: assume there exists $\delta>0$ such that $ \limsup_{i}|E\cap Q_i|\leq (1-\delta) |Q_i|$.
So there exists $i_0>0$ such that for all $i\geq i_0$, $|Q_i\setminus E|\geq \delta/2 |Q_i|$. Then we get
\[\Per_K(E)\geq \sum_i \int_{Q_i\cap E}\int_{Q_i\setminus E} K(x-y)dxdy\geq \mu \sum_{i\geq i_0}  |Q_i\cap E| |Q_i\setminus E| \geq \mu \frac{\delta}{2} 
\sum_{i\geq i_0}|E\cap Q_i|. \]
This implies, recalling that $\Per_K(E)<\infty$,  that $\sum_{i}|E\cap Q_i|<\infty$, which is in contradiction with $|E|=\infty$. 

{\it Case 2}: assume there exists $\delta>0$ such that $ \limsup_{i}|E\cap Q_i|\leq (1-\delta) |Q_i|$. 
Then the argument is the same as in {\it Case 1}, substituting $E$ with $\R^N\setminus E$.

{\it Case 3}: assume  that here exist two   subsequences  $Q_{i_n},  Q_{j_n}$ such that  $\lim_{n}|Q_{i_n}\cap  E|= \lim_{n}| Q_{j_n}\setminus E|= |Q_i|$. 
 Therefore for $\delta>0$ there exists $i_0$ such that for all $j_n, i_n \geq i_0$, we get $| Q_{i_n}\cap  E|, | Q_{j_n}\setminus E|>(1-\delta)|Q_i|$. 
By continuity we get that there exists a subsequence $\bar Q_{i}$  such that  $|\bar Q_{i}\cap E|, |\bar Q_{i}\setminus E|\geq \delta |\bar  Q_i|$, 
So,
\begin{multline*} \Per_K(E)\geq \sum_{i}  \int_{\bar Q_{i}\cap E}\int_{\bar Q_{i}\setminus E} K(x-y)dxdy\\ 
\geq \mu \sum_{i}  |\bar Q_i\cap E| |\bar Q_i\setminus E| \geq \mu \delta^2\sum_{i} |\bar Q_i|^2 =\infty\end{multline*} 
giving a contradiction to $\Per_K(E)<\infty$. 

\smallskip

As a consequence, neither of the three cases can arise, which implies that either $|E|$ or $|R^N\setminus E|$ are finite. 

\end{proof}

%\begin{proposition}
%Let $E$ be a measurable set and $B_R$ the ball of radius $R$. 
%There exists a constant $C=C(n, R?)$, such that if    $|E\cap B_R|\leq \frac{1}{2} |B_R|$, then
%\[\Per_K(E, B_R)\geq C|E\cap B_R|. \]
%\end{proposition} 
%\begin{proof}
% $g(m)\geq C_R m $, for all $m\leq |B_R|$, -forse meglio di cosi'-  poincare in $L^1$, rescaling. 
%\end{proof}

From Propositions \ref{isoperprop} and \ref{probono}
we immediately get the following result:

\begin{corollary} \label{coro} 
Assume that $K$ satisfies condition \eqref{kappazero}. Then, for all measurable  sets $E$ there holds
\[\Per_K(E) \ge
\min ( g(|E|),  g(|\R^N\setminus E|)).  
\] 
\end{corollary}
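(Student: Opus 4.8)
The plan is to deduce the statement directly from the two preceding results by a short dichotomy argument, so no new ideas are needed. First I would dispose of the trivial case: if $\Per_K(E)=+\infty$ there is nothing to prove, since $g$ is finite on finite volumes (the ball $B_m$ has finite volume and finite $K^\star$-perimeter by Remark \ref{remarkfinit}) and $g(+\infty)=+\infty$, so the right-hand side $\min(g(|E|),g(|\R^N\setminus E|))$ is always $\le+\infty$. Hence I may assume $\Per_K(E)<+\infty$.

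Next I would invoke Proposition \ref{probono}, whose hypothesis is precisely condition \eqref{kappazero}, to conclude that either $|E|<+\infty$ or $|\R^N\setminus E|<+\infty$. If $|E|<+\infty$, Proposition \ref{isoperprop} yields $\Per_K(E)\ge\Per_{K^\star}(B_{|E|})=g(|E|)\ge\min(g(|E|),g(|\R^N\setminus E|))$. If instead $|\R^N\setminus E|<+\infty$, I would use the symmetry $\Per_K(E)=\Per_K(\R^N\setminus E)$ from Proposition \ref{propper}, apply Proposition \ref{isoperprop} to the set $\R^N\setminus E$, and again obtain $\Per_K(E)=\Per_K(\R^N\setminus E)\ge\Per_{K^\star}(B_{|\R^N\setminus E|})=g(|\R^N\setminus E|)\ge\min(g(|E|),g(|\R^N\setminus E|))$. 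In either case the claimed inequality follows.

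Since the statement is an immediate corollary, I do not expect any genuine obstacle; the only point requiring a little care is the bookkeeping around infinite volumes, namely ensuring that when one of $|E|$, $|\R^N\setminus E|$ equals $+\infty$ the corresponding term $g(+\infty)=+\infty$ does not spoil the minimum. This is automatic, because in that situation the other (finite) term is the one realizing the minimum, and the dichotomy supplied by Proposition \ref{probono} guarantees exactly that this finite term exists; moreover the case where both $|E|$ and $|\R^N\setminus E|$ are infinite cannot occur once $\Per_K(E)<+\infty$, so the two cases above are exhaustive.
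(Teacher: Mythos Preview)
Your proposal is correct and matches the paper's approach: the paper states the corollary as an immediate consequence of Propositions~\ref{isoperprop} and~\ref{probono} without further argument, and your dichotomy (using the symmetry $\Per_K(E)=\Per_K(\R^N\setminus E)$ from Proposition~\ref{propper}) is exactly the intended one-line deduction.
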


\section{Poincar\'e inequality} 
\begin{proposition} \label{em} 
Assume that the kernel $K$ satisfies  \eqref{kappazero} and that there exist $k\geq 1$  and a constant $C$ depending on $k$, $N$ such that 
\begin{equation}\label{isoperi1} 
g(m)\ge \frac{m^k}{C} \qquad \forall m>0.
\end{equation}
Then, for all  $u\in L^1_{loc}$ with  $J_K(u)<\infty$, there holds  
 \begin{equation}\label{poincare1} \|u-m(u)\|_{L^{k}(\R^N)}\leq C J_K(u),\end{equation}
where \begin{equation}\label{median} 
m(u)=\inf\{s\  |\   |\{u(x)>s\}|<\infty\}\in \R. \end{equation}
Moreover if $u\in L^1(\R^N)$, then $m(u)=0$.
\end{proposition}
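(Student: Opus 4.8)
The plan is to combine the layer--cake representation of $u$, the isoperimetric bound of Corollary~\ref{coro} (in the quantitative form \eqref{isoperi1}), and the coarea formula \eqref{coarea}.

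\medskip

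\emph{Reduction, and finiteness of $m(u)$.} First I would observe that $J_K$ depends on $u$ only through the differences $u(x)-u(y)$, so $J_K(u+c)=J_K(u)$ for every $c\in\R$, while $m(u-c)=m(u)-c$ directly from \eqref{median}. Hence, once we know $m(u)\in\R$, replacing $u$ by $u-m(u)$ reduces the statement to proving that $m(u)=0$ implies $\|u\|_{L^k(\R^N)}\le C\,J_K(u)$. To check that $m(u)\in\R$, I would use that $u$ is a.e.\ finite (being in $L^1_{loc}$) and that, by \eqref{coarea}, $\Per_K(\{u>s\})<\infty$ for a.e.\ $s$. If $m(u)=-\infty$ then $|\{u>s\}|<\infty$ for every $s$ and $|\{u>s\}|\uparrow|\R^N|=+\infty$ as $s\to-\infty$, so $|\{u\le s\}|=+\infty$ and Corollary~\ref{coro} with \eqref{isoperi1} give $\Per_K(\{u>s\})\ge\tfrac1C|\{u>s\}|^{1/k}\to+\infty$; if $m(u)=+\infty$ then $|\{u>s\}|=+\infty$ for every $s$, hence by Proposition~\ref{probono} (and monotonicity in $s$) $|\{u\le s\}|<\infty$ for every $s$ and $|\{u\le s\}|\uparrow+\infty$ as $s\to+\infty$, so Corollary~\ref{coro} with \eqref{isoperi1} give $\Per_K(\{u>s\})\ge\tfrac1C|\{u\le s\}|^{1/k}\to+\infty$. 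Either way $J_K(u)=\int_\R\Per_K(\{u>s\})\,ds=+\infty$, a contradiction.

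\medskip

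\emph{The main inequality.} Assume now $m(u)=0$ and split $u=u^{+}-u^{-}$, so that $\|u\|_{L^k}^k=\|u^{+}\|_{L^k}^k+\|u^{-}\|_{L^k}^k$. Writing $u^{+}=\int_0^\infty\chi_{\{u>t\}}\,dt$ and $u^{-}=\int_0^\infty\chi_{\{u<-r\}}\,dr$ and applying Minkowski's integral inequality I would get
\[\|u^{+}\|_{L^k}\le\int_0^\infty|\{u>t\}|^{1/k}\,dt,\qquad \|u^{-}\|_{L^k}\le\int_0^\infty|\{u\le-r\}|^{1/k}\,dr.\]
For $t>0$ the normalization gives $|\{u>t\}|<\infty$, hence $|\R^N\setminus\{u>t\}|=+\infty$, and Corollary~\ref{coro} with \eqref{isoperi1} yields $|\{u>t\}|^{1/k}\le C\,\Per_K(\{u>t\})$. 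For a.e.\ $r>0$ one has $\Per_K(\{u>-r\})<\infty$ while $|\{u>-r\}|=+\infty$ (because $-r<0=m(u)$), so Proposition~\ref{probono} forces $|\{u\le-r\}|<\infty$, and then Corollary~\ref{coro} with \eqref{isoperi1} yields $|\{u\le-r\}|^{1/k}\le C\,\Per_K(\{u>-r\})$. Inserting these pointwise bounds and using \eqref{coarea},
\[\|u^{+}\|_{L^k}\le C\int_0^\infty\Per_K(\{u>s\})\,ds,\qquad \|u^{-}\|_{L^k}\le C\int_{-\infty}^{0}\Per_K(\{u>s\})\,ds,\]
whence, using $a^k+b^k\le(a+b)^k$ for $a,b\ge0$ and $k\ge1$ together with \eqref{coarea},
\begin{align*}
\|u\|_{L^k}^k&\le C^k\left[\left(\int_0^\infty\Per_K(\{u>s\})\,ds\right)^{k}+\left(\int_{-\infty}^{0}\Per_K(\{u>s\})\,ds\right)^{k}\right]\\
&\le C^k\left(\int_{-\infty}^{+\infty}\Per_K(\{u>s\})\,ds\right)^{k}=C^k\,J_K(u)^k,
\end{align*}
which is \eqref{poincare1}. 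Finally, if $u\in L^1(\R^N)$ then $|\{u>s\}|\le\|u\|_{L^1}/s<\infty$ for $s>0$, while $|\{u\le s\}|\le\|u\|_{L^1}/(-s)<\infty$, hence $|\{u>s\}|=+\infty$, for $s<0$; therefore $m(u)=0$.

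\medskip

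\emph{Where the difficulty lies.} I expect the only delicate point to be the ``sidedness'' bookkeeping: Corollary~\ref{coro} controls $\Per_K(E)$ only through the measure of whichever of $E$, $\R^N\setminus E$ is finite, and one must make sure that, after the normalization $m(u)=0$, the finite side of $\{u>t\}$ is $\{u>t\}$ itself for $t>0$ and its complement for $t<0$. This is precisely the step that uses hypothesis \eqref{kappazero}, via Proposition~\ref{probono}; the remaining ingredients (Minkowski's inequality, the elementary bound $a^k+b^k\le(a+b)^k$, and the coarea formula) are routine.
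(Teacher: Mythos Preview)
Your proof is correct and follows essentially the same route as the paper's: both establish $m(u)\in\R$ via the coarea formula together with Proposition~\ref{probono} and the isoperimetric bound, then split $u-m(u)$ into positive and negative parts and pass from the layer--cake representation to the $L^k$ estimate by applying \eqref{isoperi1} levelwise and summing with \eqref{coarea}. The only cosmetic difference is that where you invoke Minkowski's integral inequality, the paper quotes the equivalent inequality for decreasing functions from \cite[Lemma~3.48]{afp}, and your combination step $a^k+b^k\le(a+b)^k$ makes explicit what the paper leaves implicit.
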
  

\begin{proof}
The argument is similar to the one  in  \cite[Theorem 3.47]{afp}. 
First of all we observe that, since $J_K(u)<\infty$,  by the coarea formula \eqref{coarea} 
the set $S$ of $s\in\R$ such that $\Per_K(\{x\ | u(x)>s\})<\infty$ is dense in $\R$.  
So by Proposition \ref{probono} for every $s\in S$, either  $|\{u(x)>s\}|<\infty$ or $ |\{u(x)\leq s\}|<\infty$.
Note that if $s>m(u)$, then there exists $t\in (m(u),s)$ such that $ |\{u(x)>t\}|<\infty$ and then $ |\{u(x)>s\}|<\infty$. Analogously, if $s<m(u)$, then
$ |\{u(x)\leq t\}|<\infty$. 
Moreover $m(u)\in\R$. Indeed, if by contradiction this were not true, and e.g. $m(u)=-\infty$ (the other case being similar), we would  get that 
$|\{u(x)> t\}|<\infty$ for every $t\in\R$.  By the coarea formula 
$$
\int_{-n-1}^{-n} \Per_K(\{u(x)> t\})dt\le \frac 12 J_K(u) \qquad \forall n\in\mathbb N,
$$
so  there exist $r>0$
 and $t_n\in [-n-1,-n]$ such that $\Per_K(\{u(x)> t_n\}\leq r$. By the isoperimetric inequality $|\{u(x)> t_n\}|\leq (Cr)^k$,
but this is contradiction with the fact that $\{u(x)> t_n\}\to \R^N$ as $n\to +\infty$.

Finally, if $u\in L^1(\R^N)$, $m(u)=0$. Indeed, by Chebychev inequality for every $s>0$, we have that 
\[ |\{x\ | u(x)\leq -s\}|+|\{x\ | u(x)> s\}|\leq |\{x\ | |u(x)|\geq s\}|\leq \frac{1}{s} \int_{\R^N} |u|dx<+\infty.\]

We denote by $u^+=\max (u-m(u), 0)$ the positive part of $u-m(u)$. Then, by definition of $m(u)$, we get that $|\{x\ | u^+(x)>s\}|<\infty$ for all $s>0$. 

After a change of variable, we get 
\begin{multline}\label{uno} \int_{\R^N} (u^+)^{k}dx=\int_0^{+\infty} |\{x\ | u^+(x)>t^{\frac{1}{k}}\}| dt\\
=k\int_0^{+\infty} |\{x\ | u^+(x)>s\}| s^{k-1}ds.
\end{multline} 
In \cite[Lemma 3.48]{afp})  it is shown that, if $f:(0, +\infty)\to [0, +\infty)$ is  decreasing and $k\geq 1$, then 
\[k\int_0^T f(s) s^{k-1} ds\leq \left(\int_0^T f(s)^{\frac{1}{k}}ds \right)^k\qquad \forall T>0. \]
So, we apply this inequality to  $f(s)= |\{x\ | u^+(x)>s\}|$. 
This gives, by  recalling  the definition of $m(u)$ and using isoperimetric inequality \eqref{isoperi1}, 
\begin{multline} \label{due} k\int_0^{+\infty} |\{x\ | u^+(x)>s\} | s^{k-1}ds\leq  \left( \int_0^{+\infty} |\{x\ | u^+(x)>s\}|^{\frac{1}{k}}ds\right)^{k}
\\ \leq   \left( C \int_0^{+\infty} J_K(\{x\ | u(x)>s\}) ds\right)^{k}.
\end{multline} 
Therefore, putting together \eqref{uno} and \eqref{due} and recalling the coarea formula \eqref{coarea}, we get 
\[\left(\int_{\R^N} (u^+)^{k}dx\right)^{\frac{1}{k}}\leq C J_K(u). \] 
Repeating the same argument for the negative part of $u-m(u)$, i.e.  $u^-=-\min (u-m(u),0)$, we conclude. Indeed, again by   definition of $m(u)$, we get that 
for all $s>0$, $|\{x\ | u^-(x)>s\}|<\infty$. 
 \end{proof} 

\begin{remark}\upshape
If $u\in L^1(\R^N)$ assumption \eqref{kappazero} is not needed, indeed for all $s\in \R$ with $s\ne 0$ either 
$|\{u(x)>s\}|<\infty$ or $ |\{u(x)\leq s\}|<+\infty$, so that it is not necessary to use
Proposition \ref{probono}.
\end{remark}

 \section{Existence of an isoperimetric profile}

In this section we show the existence of an isoperimetric profile, that is, a solution to Problem \eqref{isopb}, under suitable assumptions on the kernel $K$.
First of all we will assume throughout  this section that $K\not\equiv 0$ and 
\begin{equation}\label{kinl1} K\in L^1(\R^N). \end{equation} 

We observe that, if $K=K^\star$, then by Proposition \ref{isoperprop} we know that the ball of volume $m$
is the unique minimizer of \eqref{isopb}, up to translations.

In order to get existence of minimizers, we first consider a relaxed version of the perimeter functional, obtained by extending it to general densities functions.
More precisely, we define the new energy as  follows: given $f:\R^N\to [0,1]$, with $f\in L^1(\R^N)$, we let 
\begin{equation}\label{relaxed} 
\mathcal{P}_K(f):=\int_{\R^N}\!\!\int_{\R^N} f(x)[1-f(y)] K(x-y)dxdy.
\end{equation} 
Note that  $\mathcal{P}_K(\chi_E)= \Per_K(E)$ and the constraint $0\leq f\leq 1$  
is inherited by the original problem, naturally arising from the relaxation procedure. 

Note that  the previous energy can be written as 
\begin{equation}\label{relaxed2}
\mathcal{P}_K(f)=\|f\|_{L^1}\|K\|_{L^1}-\int_{\R^N}\!\!\int_{\R^N} f(x)f(y) K(x-y)dxdy.
\end{equation}

The relaxed version of the isoperimetric problem \eqref{isopb} can be restated as follows.  Given $m\geq 0$, we consider 
\begin{equation} \label{min}
\inf_{f\in \mathcal{A}_m} \mathcal{P}_K(f), 
\end{equation} 
where  the set of admissible functions is defined as 
\[\mathcal{A}_m=\left\{ f\in L^1(\R^N, [0,1]), \\ \int_{\R^N} f(x)dx=m\right\}.\]

Notice also that 
 \[\liminf_n \Per_K(E_n)= \mathcal{P}_K(f)\]
where the liminf is taken over all sequences   $E_n$  with $|E_n|=m$, such that  
$\chi_{E_n}\stackrel{*}{\rightharpoonup} f$ weakly$^*$ in $L^\infty$. 

Due to \eqref{relaxed2}, the minimization problem \eqref{min} is equivalent to the maximization problem
\begin{equation} \label{maxrel}
\sup_{f\in \mathcal{A}_m}\int_{\R^N}\!\!\int_{\R^N} f(x)f(y)K(x-y)dxdy. 
\end{equation} 

We now show monotonicity and subadditivity  of the energy in \eqref{maxrel} with respect to $m$.
\begin{lemma}\label{suba}
\ \ \  \  
\begin{itemize}
\item[i)] If $m_1>m_2>0$ then
\[\sup_{f\in \mathcal{A}_{m_1}}\int_{\R^N}\!\!\int_{\R^N} f(x)f(y)K(x-y)dxdy\geq \sup_{f\in \mathcal{A}_{m_2}}\int_{\R^N}\!\!\int_{\R^N} f(x)f(y)K(x-y)dxdy. \]
\item[ii)] If $\sum_{i=1}^l m_i=m$ then 
\[\sum_{i=1}^l \sup_{f_i\in \mathcal{A}_{m_i}}\int_{\R^N}\!\!\int_{\R^N} f_i(x)f_i(y)K(x-y)dxdy\leq\sup_{f\in \mathcal{A}_m}\int_{\R^N}\!\!\int_{\R^N} f(x)f(y)K(x-y)dxdy.\]
Moreover, if the equality holds in the above inequality and if the supremum in \eqref{maxrel}
is attained for all volumes $m_i$'s, then $m_i=0$ for all $i$'s except one. 
\end{itemize} 
\end{lemma}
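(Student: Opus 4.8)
The plan is to prove both statements by direct manipulation of admissible competitors, exploiting the positivity of $K$ and the bilinear structure of the functional in \eqref{maxrel}. For part i), given $m_1>m_2$, take any $f\in\mathcal A_{m_2}$; then $\frac{m_2}{m_1}\cdot\frac{m_1}{m_2}$ rescaling is not volume‑preserving, so instead I would use a \emph{translation‑and‑union} trick adapted to densities: choose $g=f\vee(\tau_v f')$ where $f'$ is a small bump of mass $m_1-m_2$ supported far away. More cleanly, since $0\le f\le 1$ and $\int f=m_2<m_1$, one can find $h\in L^1(\R^N,[0,1])$ with $\int h=m_1$, $h\ge f$, and $h-f$ supported at distance $R$ from $\operatorname{supp}f$ for $R$ large. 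Then
\[
\int\!\!\int h(x)h(y)K(x-y)\,dx\,dy \;\ge\; \int\!\!\int f(x)f(y)K(x-y)\,dx\,dy,
\]
because $h(x)h(y)\ge f(x)f(y)$ pointwise (all quantities nonnegative and $h\ge f\ge0$). Taking the supremum over $f\in\mathcal A_{m_2}$ gives i). (The far‑away placement is not even needed here — monotonicity is immediate from $h\ge f\ge 0$ — but it will be the right picture for ii).)

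For part ii), given $f_i\in\mathcal A_{m_i}$ (or near‑optimal competitors), pick vectors $v_1,\dots,v_l$ and set $f:=\sum_{i=1}^l \tau_{v_i}f_i$, where $\tau_v f:=f(\cdot-v)$. If the $v_i$ are chosen so that the supports of the $\tau_{v_i}f_i$ are pairwise at distance $\ge R$, then for $R$ large the mutual interaction terms $\int\!\!\int \tau_{v_i}f_i(x)\,\tau_{v_j}f_j(y)K(x-y)\,dx\,dy$ are small: indeed, since $K\in L^1(\R^N)$ by \eqref{kinl1}, the cross term is bounded by $\|f_i\|_{L^1}\sup_{|z|\ge R}\!\big(\!\int_{|w|\ge R/2}\!K\big)$‑type quantities, hence $\to0$ as $R\to\infty$. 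Also $f$ takes values in $[0,1]$ (disjoint supports) and $\int f=\sum m_i=m$, so $f\in\mathcal A_m$. Passing $R\to\infty$ yields
\[
\sum_{i=1}^l \int\!\!\int f_i(x)f_i(y)K(x-y)\,dx\,dy \;\le\; \sup_{f\in\mathcal A_m}\int\!\!\int f(x)f(y)K(x-y)\,dx\,dy,
\]
and taking the supremum over each $f_i$ gives the inequality in ii).

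For the equality/rigidity statement in ii): suppose equality holds and the sup in \eqref{maxrel} is attained at each $m_i$ by some optimal $f_i^\star$, and also at $m$ by some $f^\star$. The idea is that if two of the masses, say $m_1,m_2$, are strictly positive, then from part i) (applied with $m_1+m_2$ versus $m_1$) combined with strict positivity of $K$ near the origin — here I would invoke, or rather re‑derive in this $L^1$ setting, a \emph{strict} version of the union inequality: placing two positive‑mass densities into one competitor of mass $m_1+m_2$ and then exploiting that any maximizer of mass $m_1+m_2$ must have connected‑in‑measure support (otherwise splitting it and bringing the pieces together strictly increases the energy, since the cross interaction $\int\!\!\int f(x)f(y)K(x-y)$ is strictly positive whenever both pieces have positive mass and $K>0$ on a neighborhood of $0$). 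This forces $\sum_i(\text{sup at }m_i)<\text{sup at }m$ unless all but one $m_i$ vanish.

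\textbf{Main obstacle.} The delicate point is the rigidity part: turning ``the cross term is nonnegative'' into ``the cross term is strictly positive, so equality forces one mass'' requires showing that an optimal density at mass $m$ cannot be split into two far‑apart pieces of positive mass — i.e. some indecomposability of maximizers — and quantifying the strict gain when one \emph{does} bring two positive‑mass clusters together. This needs the strict positivity of $K$ on a neighborhood of $0$ (which holds under \eqref{kappazero}, or here simply because $K\not\equiv0$ and is used after translating pieces close together) and a careful argument that a maximizer exists/behaves well; the monotonicity and subadditivity inequalities themselves (the two displays above) are the easy part, following from pointwise nonnegativity and the $L^1$ decay of $K$ at infinity.
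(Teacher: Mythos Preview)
Your argument for i) is correct and essentially the paper's: augment $f\in\mathcal A_{m_2}$ to some $h\ge f$ with $\int h=m_1$ (possible since $\int(1-f)=+\infty$) and use $h(x)h(y)\ge f(x)f(y)$.

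For the inequality in ii) there is a genuine gap. A generic $f_i\in\mathcal A_{m_i}$ need not have bounded support, so you \emph{cannot} choose $v_i$ making the supports of the $\tau_{v_i}f_i$ pairwise disjoint (let alone at distance $\ge R$); hence $f=\sum_i\tau_{v_i}f_i$ may exceed $1$ and fail to lie in $\mathcal A_m$. The paper repairs exactly this point: given $\eps>0$ it first picks $R$ with $\int_{\R^N\setminus B(0,R)}f_i\le\eps$, replaces each $f_i$ by $f_i\chi_{B(0,R)}$ (losing at most $2\eps\|K\|_{L^1}$ in each self-interaction), and only then translates to disjoint balls. The resulting competitor lies in $\mathcal A_{m'}$ for some $m'\in[m-2\eps,m]$, so one also needs part i) to pass from $\sup_{\mathcal A_{m'}}$ to $\sup_{\mathcal A_m}$. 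Your parenthetical ``or near-optimal competitors'' may hint at this, but as written the construction of $f$ is not admissible.

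For the rigidity statement you are overcomplicating matters, and the detour through ``indecomposability of maximizers'' and ``connected-in-measure support'' is not what is needed. The paper's observation is direct: once you have truncated and are placing $f_1\chi_{B(0,R)}$ and $f_2(\cdot-x_R)\chi_{B(x_R,R)}$, the cross term
\[
\int_{B(0,R)}\int_{B(0,R)} f_1(x)f_2(y)K(x-y+x_R)\,dx\,dy
\]
is a nonnegative continuous function of $x_R$ with positive integral $(\int_{B(0,R)}f_1)(\int_{B(0,R)}f_2)\|K\|_{L^1}$, so (using only $K\not\equiv 0$, not positivity near $0$) one may choose $x_R$ making it strictly positive while keeping the two pieces disjoint. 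This turns the last inequality in the chain into a strict one, and if the $f_i$ are actual maximizers that strictness immediately forces all but one $m_i$ to vanish. No structural property of maximizers is required.
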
 

\begin{proof}
$i)$ Let $f\in \mathcal{A}_{m_2}$ and let $\eps>0$. 
%Fix $R>0$ such that $\int_{\R^N\setminus B(0,R) } f(x)dx<\eps$. 
Let $E\subseteq \R^N$ such that $f(x)<1-\eps$ in $E$ and $|E|=(m_1-m_2)/\eps$. We define 
\[ \tilde f(x):=f(x)+\eps\chi_E(x)\qquad x\in \R^N.\]
%\begin{cases} f(x) & x\in B(0, R)\\ 
%f(x)+\eps\chi_E(x)& x\in \R^N\setminus B(0, R). \end{cases} \]
We have $\tilde f\in \mathcal{A}_{m_1}$.
Moreover, since $K\geq 0$, 
\begin{multline*} 
\int_{\R^N}\!\!\int_{\R^N} \tilde f(x)\tilde f(y)K(x-y)dxdy= \int_{\R^N}\!\!\int_{\R^N} f(x)f(y)K(x-y)dxdy\\+4\eps \int_{\R^N}\int_{E} f(x)K(x-y)dxdy+\eps^2\int_{E}\int_{E}K(x-y)dxdy\\
\ge\int_{\R^N}\!\!\int_{\R^N} f(x)f(y)K(x-y)dxdy.\end{multline*} 

$ii)$ We consider the case $l=2$, as the case $l>2$ can be treated analogously. Let $f_i\in\mathcal{A}_{m_i}$, and let $\eps>0$ and $R>0$ such that $\int_{\R^N\setminus B(0,R)} f_i(x)dx\le
\eps$ for $i=1,2$.  Note that 
\[  \left|\int_{\R^N}\!\!\int_{\R^N} f_i(x) f_i(y)K(x-y)dxdy-\int_{B(0,R)}\int_{B(0,R)} f_i(x) f_i(y)K(x-y)dxdy\right|\leq 2\,\eps\, \|K\|_{L^1}.\]

 We fix $x_R\in \R^N$ such that $(f_1(x)\chi_{B(0, R)}(x))(f_2(x-x_R)\chi_{B(x_R,R)})=0$ for 
 a.e. $x$, and we let  $$f(x):=f_1(x)\chi_{B(0, R)}(x)+f_2(x-x_R)\chi_{B(x_R,R)}.$$ 
 Then, $f\in\mathcal{A}_{m'}$ for some $m'\in[m-2\eps,m]$.
Hence, by item $i)$, we get
\begin{multline}\label{don} 
\sup_{g\in \mathcal{A}_m}\int_{\R^N}\!\!\int_{\R^N} g(x)g(y)K(x-y)dxdy\geq 
\sup_{g\in \mathcal{A}_{m'}}\int_{\R^N}\!\!\int_{\R^N} g(x)g(y)K(x-y)dxdy\\ \geq  \int_{\R^N}\!\!\int_{\R^N} f(x) f(y)K(x-y)dxdy= \sum_{i=1}^2 \int_{B(0,R)}\int_{B(0,R)} f_i(x) f_i(y)K(x-y)dxdy\\
+\int_{B(0,R)}\int_{B(0, R)}f_1(x)f_2(y)K(x-y+x_R) dxdy\\\geq \sum_{i=1}^2 \int_{\R^N}\!\!\int_{\R^N} f_i(x) f_i(y)K(x-y)dxdy-4\,\eps\, \|K\|_{L^1},
\end{multline} 
from which we conclude by the arbitrariness of $\eps$.  

Note that, since $K\not\equiv 0$, we can always choose $x_R$ such that 
\[\int_{B(0,R)}\int_{B(0, R)}f_1(x)f_2(y)K(x-y+x_R) dxdy>0\] so that the last inequality in \eqref{don}  
is in fact a strict inequality. 
\end{proof} 

\subsection{The potential function} 
Given a function $f\in \mathcal{A}_m$, we can define the potential of $f$ as 
\begin{equation}\label{pot}
V(x):=\int_{\R^N} f(y)K(x-y)dy. \end{equation} 
In the following we give some properties of the potential $V$.
\begin{proposition}\label{propot}
\ \ \  \  
\begin{itemize}
\item[i)]  $V\in C(\R^N)\cap L^1(\R^N)\cap L^\infty(\R^N)$, with $0\leq V\leq \|K\|_{L^1}$ and $\|V\|_{L^1}=m\|K\|_{L^1}$.  
\item[ii)] $\lim_{|x|\to +\infty}V(x)=0$. 
\item[iii)] There exists $x\in \R^N$, density point of $f$,  such that $f(x)<1$  and $V(x)>0$.
\end{itemize} 
\end{proposition}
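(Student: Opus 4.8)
The plan is to handle the three statements in order; items (i) and (ii) are routine, while (iii) is where the real work lies. Throughout we use that $K\in L^1(\R^N)$ by \eqref{kinl1}, that $K\not\equiv 0$ (so $|\{K>0\}|>0$), and we take $m>0$ (if $m=0$ then $f=0$ a.e., $V\equiv 0$, and (iii) is vacuous).

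For (i): $V=K*f$, so $V\ge 0$ because $f,K\ge 0$, and $V\le\|K\|_{L^1}$ because $f\le 1$. By Tonelli's theorem $\int_{\R^N}V(x)\,dx=\int_{\R^N}f(y)\big(\int_{\R^N}K(x-y)\,dx\big)\,dy=m\|K\|_{L^1}$, which shows at once that $V\in L^1(\R^N)$ and, since $V\ge 0$, that $\|V\|_{L^1}=m\|K\|_{L^1}$. Continuity follows from $|V(x)-V(x')|\le\|f\|_{L^\infty}\int_{\R^N}|K(z-x)-K(z-x')|\,dz$ together with continuity of translations in $L^1$; this argument in fact gives uniform continuity, so $V\in C(\R^N)\cap L^1(\R^N)\cap L^\infty(\R^N)$.

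For (ii): a non-negative, uniformly continuous function in $L^1(\R^N)$ must tend to $0$ at infinity. Indeed, if $V(x_n)\ge\eps$ for some $\eps>0$ along a sequence with $|x_n|\to+\infty$, uniform continuity yields $\delta>0$ with $V>\eps/2$ on $B(x_n,\delta)$; after passing to a subsequence these balls are pairwise disjoint, so $\int_{\R^N}V\ge\sum_n(\eps/2)|B(0,\delta)|=+\infty$, contradicting (i). (Equivalently, approximate $K$ in $L^1$ by $K_j\in C_c(\R^N)$ and note that $K_j*f\to 0$ at infinity because $f\in L^1$, while $\|K*f-K_j*f\|_{L^\infty}\le\|K-K_j\|_{L^1}$.)

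For (iii), the key reduction is: it suffices to prove that $U\cap\{f<1\}$ has positive measure, where $U:=\{V>0\}$. By (i), $V$ is continuous and $\ge 0$, and $\int_{\R^N}V=m\|K\|_{L^1}>0$, so $U$ is a non-empty open set. Granting $|U\cap\{f<1\}|>0$, from $\{f<1\}=\bigcup_{k\ge 1}\{f\le 1-1/k\}$ we get $|U\cap\{f\le 1-1/k\}|>0$ for some $k$; by the Lebesgue density theorem we may choose $x$ in this set that is both a density point of $\{f\le 1-1/k\}$ and a Lebesgue point of $f$, and then $V(x)>0$, $x$ is a density point of $f$, and $f(x)\le 1-1/k<1$. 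To prove the reduced claim, argue by contradiction: if $|U\cap\{f<1\}|=0$ then $f=1$ a.e. on $U$, so $U\subseteq\{f=1\}$ up to a null set and $|U|\le\int f=m<+\infty$. Using $V=0$ outside $U$ and Tonelli,
\[
m\|K\|_{L^1}=\int_{U}V=\int_{\R^N}f(y)\,(K*\chi_U)(y)\,dy\le\|K*\chi_U\|_{L^\infty}\,\|f\|_{L^1}\le\|K\|_{L^1}\,m,
\]
so equality holds throughout; hence $(K*\chi_U)(y)=\|K\|_{L^1}$ for a.e. $y\in\{f>0\}\supseteq U$, which, $K$ being even, means $y+\{K>0\}\subseteq U$ up to a null set for a.e. $y\in U$. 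Integrating this inclusion over $y\in U$ and over the translation variable in $\{K>0\}$ gives $|(U+z)\setminus U|=0$, hence $U+z=U$ up to a null set (as $|U|<\infty$), for a.e. $z\in\{K>0\}$. Since $|\{K>0\}|>0$, the difference set $\{K>0\}-\{K>0\}$ contains a neighbourhood of the origin (Steinhaus), so the group of translations leaving $U$ invariant is all of $\R^N$; thus $\chi_U$ is a.e. constant and $|U|\in\{0,+\infty\}$, contradicting $0<|U|<+\infty$. The Steinhaus-type translation-invariance step is the only delicate point; everything else is bookkeeping with Tonelli's theorem and the Lebesgue density theorem.
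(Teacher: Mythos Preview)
Your proof is correct. Parts (i) and (ii) are essentially the same as in the paper, with only cosmetic differences: for continuity you translate $K$ rather than $f$ (both are in $L^1$, so either works), and for the decay at infinity you use the clean ``uniformly continuous and $L^1$ implies vanishing at infinity'' principle in place of the paper's direct $\eps$--splitting of the integral.

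Part (iii) is where your argument genuinely diverges from the paper's. The paper proceeds \emph{constructively}: it fixes a bounded set $A$ on which $K\ge\delta$, starts from a Lebesgue point $x_0$ with $f(x_0)>0$, and iteratively translates by elements of $A$, showing $V>0$ at each new point; if $f<1$ ever occurs the claim follows, and if $f=1$ at every step one reaches a volume contradiction with $\int f=m<\infty$. You instead argue \emph{globally by contradiction}: if $f=1$ a.e.\ on the open set $U=\{V>0\}$, the equality $\int_U V=m\|K\|_{L^1}$ forces $(K*\chi_U)=\|K\|_{L^1}$ a.e.\ on $U$, hence $U$ is invariant (up to null sets) under translation by a.e.\ $z\in\{K>0\}$; the stabilizer is then a closed subgroup of positive measure, hence all of $\R^N$ by Steinhaus, making $\chi_U$ a.e.\ constant and contradicting $0<|U|\le m<\infty$. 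Your route is tighter and avoids the somewhat informal volume-counting at the end of the paper's iteration, at the modest price of invoking the Steinhaus theorem; the paper's route is more elementary in spirit and more explicit about \emph{where} the desired point can be found.
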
  

\begin{proof} 
$i)$ By  definition $V\in L^1\cap L^\infty$, with $0\leq V\leq \|K\|_{L^1}$ and  $\|V\|_{L^1}=m \|K\|_{L^1}$.
Observe that since $f\in L^1$ and $K\in L^1$,  \[\lim_{h\to 0}\int_{\R^N} |f(y-x-h)-f(y-x)|K(y)dy =0\qquad \text{for every  $x$.}\] 
This implies that $V$ is  continuous.  

$ii)$ Let $\eps>0$ and let $A_\eps:=\{f>\eps\}$. We have
\begin{equation}\label{euno}
V(x)=\int_{\R^N} K(y)f(x-y)dy\le \eps \|K\|_{L^1} + \int_{x-A_\eps} K(y)dy.
\end{equation} 
For $R>0$ we have
\begin{equation}\label{edue}
\int_{x-A_\eps} K(y)dy =  \int_{x-(A_\eps\cap B(0,R))} K(y)dy +  \int_{x-(A_\eps\setminus B(0,R))} K(y)dy.
\end{equation} 
Since $K\in L^1(\R^N)$ and $\lim_{R\to\infty}|A_\eps\setminus B(0,R)|=0$, for $R$ sufficiently large we have 
$$
\int_{x-(A_\eps\setminus B(0,R))} K(y)dy\le \eps \|K\|_{L^1}, 
$$
which gives, recalling \eqref{edue},
\begin{eqnarray}\label{etre}
\int_{x-A_\eps} K(y)dy &\le&\int_{x-(A_\eps\cap B(0,R))} K(y)dy + \eps \|K\|_{L^1}
\\ \nonumber &\le& \int_{|y|\ge |x|-R} K(y)dy + \eps \|K\|_{L^1}\le 2\, \eps\, \|K\|_{L^1},
\end{eqnarray}
for $x$ large enough (depending on $R$).
The thesis now follows from \eqref{euno}, \eqref{etre} and the arbitrariness of $\eps$.
 
$iii)$ Since $K\not\equiv 0$, there exist $\delta>0$ and a bounded set $A$ with $|A|=k>0$ such that $0\not\in A$ and  $K(z)\geq \delta$ for a.e. $z\in A$. 
Let $x_0\in\R^N$ be a Lebesgue point of $f$ such that $f(x_0)>0$. Let  $y_0\in A$ be a point of density $1$ in $A$, such that $x_0+y_0$ is a density point of $f$ and we compute
\[V(x_0+y_0) \geq  \int_{A} f(x_0+y_0-z)K(z)dz\geq \delta \int_{A} f(x_0+y_0-z)dz>0.\]
If $f(x_0+y_0)<1$, we are done, $c\geq V(x_0+y_0)>0$. If, on the other hand, $f(x_0+y_0)=1$ for all points $y_0$ which are points of density $1$ of $A$ and density points for $f(x_0+\cdot)$, then we consider $y_1\in A$ be a point of density $1$ in $A$, such that $x_0+y_0+y_1$ is a density point of $f$ and we compute
\[V(x_0+y_0+y_1) \geq  \int_{A} f(x_0+y_0+y_1-z)K(z)dz\geq \delta \int_{A} f(x_0+y_0+y_1-z)dz>0.\]
Repeating this argument, we construct a sequence $y_n\in A$ of points of density $1$ such that $x_0+y_0+\dots+y_n$ is a density point of $f$, and such that $V(x_0+y_0+\dots+y_n)>0$. Note that for some $n\geq 0$, we get that $f(x_0+y_0+\dots+y_n)<1$. If it were not the case, we would get
\[m=\int_{\R^N} f(x)dx\geq \sum_{n} \int_{x_0+nA} f(x)dx =\sum_{n} |A|= +\infty\] 
which is impossible. 
\end{proof} 

\subsection{First and second variation}
We now compute the first and second variation of the energy in \eqref{min}.

\begin{lemma}\label{variation} 
Let   $f\in\mathcal{A}_m$ be a minimizer of \eqref{min}.
Let  $S:=\{x\ | f(x)=1\}$ and $N:=\{x\ | f(x)=0\}$.
%where we consider just density points of $f$.
\begin{itemize} 
\item[i)] 
For every  
$\psi,\phi\in  L^1(\R^N, [0,1])$  with $\int_{\R^N}\phi(x)dx=\int_{\R^N}\psi(x)dx$, and such that 
$\psi\equiv 0$ a.e. in $S$ and $\phi\equiv 0$ a.e. in $N$, the following holds 
\begin{equation}\label{fv1} \int_{\R^N}   (\psi(x)-\phi(x))V(x)dx \leq 0.\end{equation}

\item [ii)] There exists a constant $c> 0$ 
such that  \begin{equation}\label{fv}\begin{cases} V(x)\equiv c &  \text{ for every  }x\in \R^N\setminus(N\cup S)\\
V(x)\geq c &  \text{ for every  }x\in S\\ 
V(x)\leq c &  \text{ for every  }x\in N. \end{cases}\end{equation} 

\end{itemize} 
\end{lemma}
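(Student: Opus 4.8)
\emph{Part i).} This is a first-variation argument whose only subtlety is that $\psi,\phi$ may take values in all of $[0,1]$, so the naive perturbation $f+t(\psi-\phi)$ need not lie in $[0,1]$ and admissible competitors must be built by truncation. Given $\psi,\phi$ as stated, for $\eta\in(0,\tfrac12)$ put $\psi_\eta:=\psi\,\chi_{\{f\le1-\eta\}}$ and $\phi_\eta:=\phi\,\chi_{\{f\ge\eta\}}$. Since $\{f>1-\eta\}$ decreases to $\{f=1\}=S$ as $\eta\to0$ and $\psi\equiv0$ on $S$ (and symmetrically for $\phi$ and $N$), dominated convergence gives $\psi_\eta\to\psi$ and $\phi_\eta\to\phi$ in $L^1$, so that $\sigma_\eta:=\int_{\R^N}(\psi_\eta-\phi_\eta)\to0$. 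Subtracting a small amount of mass from $\psi_\eta$ when $\sigma_\eta\ge0$, or from $\phi_\eta$ when $\sigma_\eta<0$, produces $g_\eta\in L^1(\R^N,[-1,1])$ with $\int_{\R^N}g_\eta=0$ and $g_\eta\to\psi-\phi$ in $L^1$ as $\eta\to0$, whose positive part is supported in $\{f\le1-\eta\}$ and negative part in $\{f\ge\eta\}$; one then checks directly that $f+tg_\eta\in\mathcal A_m$ for every $t\in(0,\eta)$. Writing $Q(h):=\int_{\R^N}\!\int_{\R^N}h(x)h(y)K(x-y)\,dxdy$, so that by \eqref{relaxed2} $f$ maximizes $Q$ over $\mathcal A_m$, the symmetry \eqref{sim} gives
\[ Q(f+tg_\eta)=Q(f)+2t\int_{\R^N}g_\eta(x)V(x)\,dx+t^2Q(g_\eta),\qquad |Q(g_\eta)|\le\|g_\eta\|_{L^\infty}\|g_\eta\|_{L^1}\|K\|_{L^1}. \]
From $Q(f+tg_\eta)\le Q(f)$ we obtain $2\int_{\R^N}g_\eta V+tQ(g_\eta)\le0$; letting $t\to0^+$ yields $\int_{\R^N}g_\eta V\le0$, and then $\eta\to0$ (using $g_\eta\to\psi-\phi$ in $L^1$ and $V\in L^\infty$, Proposition \ref{propot}) gives \eqref{fv1}.

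\emph{Part ii).} Assume first that $A:=\R^N\setminus(N\cup S)=\{0<f<1\}$ has positive measure. Testing \eqref{fv1} with $\psi=\chi_{A_1}$, $\phi=\chi_{A_2}$ for arbitrary disjoint measurable $A_1,A_2\subseteq A$ of equal finite positive measure, and then exchanging their roles, gives $\int_{A_1}V=\int_{A_2}V$; by the usual argument this forces $V$ to be a.e.\ constant on $A$, say $V\equiv c$ there. Testing \eqref{fv1} with $\psi$ supported in $A$ and $\phi$ supported in $S$ with $\int\psi=\int\phi$ then gives $\int_S\phi\,(V-c)\ge0$ for every bounded $\phi\ge0$ supported in $S$ of small integral, whence $V\ge c$ a.e.\ on $S$; symmetrically $V\le c$ a.e.\ on $N$. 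By Proposition \ref{propot} iii) there is a Lebesgue point $x$ of $f$ with $f(x)<1$ and $V(x)>0$; by continuity $V>0$ on a ball $U\ni x$, and since $f(x)<1$ the set $\{f<1\}\cap U$ has positive measure, on which $V>0$ while $V\le c$ a.e.; hence $c>0$. Finally, if $|A|=0$ then $f=\chi_S$ a.e., and \eqref{fv1} directly gives $\ess\,\sup_N V\le\ess\,\inf_S V$; any $c$ in this interval satisfies \eqref{fv}, with $c>0$ again from Proposition \ref{propot} iii).

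\emph{Main obstacle.} The crux is producing admissible competitors in Part i): since $\psi,\phi$ may be arbitrarily close to the constraints $0$ and $1$ on $A$, the truncation together with the mass correction $\sigma_\eta$ is unavoidable, and one must verify carefully that the corrected perturbations remain in $\mathcal A_m$. Once \eqref{fv1} is established, Part ii) is a routine duality computation.
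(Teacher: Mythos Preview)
Your proof is correct and follows essentially the same route as the paper's: the truncation $\psi_\eta=\psi\chi_{\{f\le1-\eta\}}$, $\phi_\eta=\phi\chi_{\{f\ge\eta\}}$ together with a mass correction is precisely the construction used there (the paper phrases the first variation in terms of $\mathcal{P}_K$ rather than $Q$, but this is the same computation via \eqref{relaxed2}), and Part~ii) is derived from \eqref{fv1} by the same exchange argument. If anything, you are slightly more careful than the paper in justifying $c>0$ via continuity of $V$ and in treating the degenerate case $|A|=0$ separately.
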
 
\begin{proof} 

$i)$ We argue as in \cite[Lemma 1.2]{cdnp}.  
First observe that  for every $\lambda$, $\int_{\R^N} f+\lambda(\psi-\phi) dx=\int_{\R^N} f dx=m$. Moreover, for all $\lambda\in [0,1]$ and 
a.e. $x\in S\cup N$, we get that $f(x)+\lambda(\psi(x)-\phi(x))\in [0,1]$. We consider two sequences  
$\psi_\eps\to \psi$, $\phi_\eps\to \phi$ in $L^1$ such that
 $\int_{\R^N}\psi_\eps dx=\int_{\R^N}\phi_\eps dx=\int_{\R^N}\phi dx$ and such that $\psi_\eps(x)\equiv 0$ on the set $\{x | f(x)>1-\eps\}$ and $\phi_\eps\equiv 0$ 
 on the set $\{x\ | f(x)\leq \eps\}$. 
 So, choosing $\lambda>0$ sufficiently small (depending on $\eps$) we can show that $ f+\lambda(\psi_\eps-\phi_\eps)\in \mathcal{A}_m$. 
 By minimality of $f$ we get 
\[\frac{1}{\lambda} (\mathcal{P}_K(f+\lambda(\psi_\eps-\phi_\eps))- \mathcal{P}_K(f))\geq 0. \] So, sending $\lambda\to 0$ and recalling that $K$ is symmetric \eqref{sim}, we get 
 \[\int_{\R^N} (\psi_\eps(x)-\phi_\eps(x))(1-2f(y))K(x-y)dx \geq 0. \] So,  sending $\eps\to 0$ and recalling that $\int_{\R^N}(\phi(x)-\psi(x)) dx=0$, we conclude 
  \begin{multline*} 0\leq \int_{\R^N} (\psi(x)-\phi(x))(1-2f(y))K(x-y)dx\\= \|K\|_{L^1}\int_{\R^N}  (\psi(x)-\phi(x))dx- 2 \int_{\R^N} (\psi(x)-\phi(x))V(x)dx\\=- 2 \int_{\R^N} (\psi(x)-\phi(x))V(x)dx.\end{multline*} 
  
$ii)$ Choosing $\psi, \phi$ in \eqref{fv1}  such that $\psi=0=\phi$ on $S \cup N$, we can exchange the role of $\psi$ and $\phi$, and obtain that in $\R^N\setminus (N\cup S)$, $V$ has to be constant. So, there exists $c> 0$ (by Proposition \ref{propot} $iii)$) such that $V(x)\equiv c$ in $\R^N\setminus (N\cup S)$.

Choosing  $ \phi$ in \eqref{fv1}  such that $\phi=0 $ a.e. in  $S \cup N$, we get, since $\int_{\R^N}(\psi(x)-\phi(x))dx=0$, 
 $\int_{\R^N\setminus (N\cup S)} (\psi(x)-\phi(x))dx=-\int_{S} \psi(x) dx$. We compute  
 \begin{multline*}0\geq \int_{S} (\psi(x)-\phi(x))V(x)dx +\int_{N} (\psi(x)-\phi(x))V(x)dx +c \int_{\R^N\setminus (S\cup N)} (\psi(x)-\phi(x))dx\\
=\int_{N} \psi(x)V(x)dx  +c \int_{\R^N\setminus (S\cup N)} (\psi(x)-\phi(x))dx=\int_{N} \psi(x)(V(x)-c) dx    \end{multline*}
for all $\psi\in L^1(\R^N, [0,1])$ such that $\psi=0$ a.e. in $S$ and $\int_{\R^N}(\psi(x)-\phi(x))dx=0$.
This implies that $V\leq c$ in $N$. With an analogous argument, exchanging the role of $\psi$ and $\phi$, we get $V(x)\geq c$ in $S$. 

\end{proof} 

As immediate consequence of the first variation \eqref{fv} and of the properties of the potential $V$ 
we obtain that every minimizer  of \eqref{min}  has compact support.  

\begin{proposition}\label{compact} 
Every minimizer $f\in\mathcal{A}_m$  of \eqref{min}  has compact support.  
\end{proposition}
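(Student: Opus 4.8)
The plan is to read the statement off directly from the first-variation identity \eqref{fv} together with the decay of the potential established in Proposition \ref{propot}, so the argument is short.

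First I would recall that, by definition, $N=\{f=0\}$, whence $\{f>0\}=S\cup\big(\R^N\setminus(S\cup N)\big)$. By Lemma \ref{variation} ii) there is a constant $c>0$ such that $V\ge c$ on $S$ and $V\equiv c$ on $\R^N\setminus(S\cup N)$; in particular $V(x)\ge c$ for a.e.\ $x$ with $f(x)>0$.

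Next I would invoke Proposition \ref{propot} i)--ii): the potential $V$ is continuous on $\R^N$ and $V(x)\to 0$ as $|x|\to+\infty$. Consequently the superlevel set $\{x\in\R^N:\ V(x)\ge c\}$ is closed and bounded, hence compact.

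Combining the two facts, $f$ vanishes a.e.\ outside the compact set $\{V\ge c\}$, which is precisely the claim that $f$ has compact support. There is essentially no obstacle here: the only point worth a word is that \eqref{fv} may be read as a genuine pointwise statement, which is legitimate because $V$ is continuous and $S$, $N$ are honest level sets of $f$; the proposition is then an immediate corollary of the two results quoted above.
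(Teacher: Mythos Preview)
Your proof is correct and follows essentially the same line as the paper's own argument: both use Lemma \ref{variation} ii) (the first-variation identity \eqref{fv}) to get $V\ge c>0$ where $f>0$, and Proposition \ref{propot} ii) (the decay of $V$ at infinity) to confine the support to a bounded set. The paper phrases this as choosing $R>0$ with $V<c$ on $\{|x|>R\}$, while you equivalently take the compact superlevel set $\{V\ge c\}$.
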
 

\begin{proof}  By Proposition \ref{propot}  $\lim_{|x|\to +\infty}V(x)=0$. 
Hence we can find 
$R>0$ such that $0\leq V(x)<c$ for $|x|>R$, where $c>0$ is the constant appearing in \eqref{fv}. By \eqref{fv} this implies immediately that the support of $f$ is contained in $B_R(0)$. 
\end{proof}

We now consider the second variation of the functional.

\begin{lemma} \label{variation2} 
Let   $f\in\mathcal{A}_m$ be a minimizer of \eqref{min} and let $S,\,N$ be as in 
Lemma \ref{variation}. Then,
for every $\xi\in  L^1(\R^N, [-1,1])$  with $\int_{\R^N}\xi(x)dx=0$, and such that 
$\xi\equiv 0$ a.e. in $N\cup S$, it holds
 \begin{equation}\label{sv} 
 \int_{\R^N}\ \int_{\R^N}\xi(x)\xi(y)K(x-y)dxdy \leq 0. \end{equation} \end{lemma}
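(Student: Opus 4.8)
The plan is to derive \eqref{sv} as a genuine second-order condition from the minimality of $f$, using the same kind of admissible perturbation as in the first variation but keeping the quadratic term. Since $f$ minimizes $\mathcal P_K$ over $\mathcal A_m$ and, by \eqref{relaxed2}, this is equivalent to $f$ maximizing the bilinear form $Q(g):=\int_{\R^N}\!\int_{\R^N} g(x)g(y)K(x-y)\,dx\,dy$ over $\mathcal A_m$, I would compute $Q(f+\lambda\xi)$ for the perturbation $f+\lambda\xi$. By bilinearity and the symmetry \eqref{sim} of $K$,
\[
Q(f+\lambda\xi)=Q(f)+2\lambda\int_{\R^N}\xi(x)V(x)\,dx+\lambda^2\int_{\R^N}\!\int_{\R^N}\xi(x)\xi(y)K(x-y)\,dx\,dy,
\]
where $V$ is the potential \eqref{pot} of $f$.

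Next I must check that $f+\lambda\xi\in\mathcal A_m$ for $\lambda$ in a two-sided neighbourhood of $0$. The volume constraint $\int\xi=0$ gives $\int(f+\lambda\xi)=m$; the pointwise constraint $0\le f+\lambda\xi\le1$ follows because $\xi\equiv0$ a.e. on $N\cup S=\{f=0\}\cup\{f=1\}$, so on the support of $\xi$ one has $0<f<1$ — strictly — and $|\xi|\le1$; a small technical point (handled exactly as in Lemma~\ref{variation}$i)$) is that to get a uniform positive $\lambda$ one first approximates $\xi$ by functions supported away from $\{f<\eps\}\cup\{f>1-\eps\}$, applies the estimate there, and passes to the limit $\eps\to0$ at the end. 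The crucial gain over the first variation is that $\xi$ is supported in the set where $V\equiv c$ (by Lemma~\ref{variation}$ii)$); hence the linear term becomes $2\lambda c\int_{\R^N}\xi(x)\,dx=0$, and it vanishes for \emph{both} signs of $\lambda$.

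With the linear term gone, maximality of $Q$ at $f$ gives $Q(f+\lambda\xi)-Q(f)=\lambda^2\int_{\R^N}\!\int_{\R^N}\xi(x)\xi(y)K(x-y)\,dx\,dy\le0$ for all small $\lambda$, and since $\lambda^2>0$ this forces the quadratic form to be $\le0$, which is exactly \eqref{sv}. (Equivalently, in the minimization formulation, $\mathcal P_K(f+\lambda\xi)-\mathcal P_K(f)=-\lambda^2\int\!\int\xi(x)\xi(y)K(x-y)\,dx\,dy\ge0$.) I do not expect any single step to be a serious obstacle; the only place demanding care is the construction of admissible two-sided perturbations, i.e. verifying that the approximation argument of Lemma~\ref{variation}$i)$ transfers verbatim and that one may exchange $\xi$ with $-\xi$ — but since $\xi$ ranges over $L^1(\R^N,[-1,1])$ already, $-\xi$ is automatically admissible, so the sign is not even an issue and only the uniform-$\lambda$ approximation needs to be spelled out.
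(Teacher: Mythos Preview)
Your proposal is correct and follows essentially the same approach as the paper: approximate $\xi$ by $\xi_\eps$ vanishing on $\{f\le\eps\}\cup\{f\ge1-\eps\}$ so that $f+\lambda\xi_\eps\in\mathcal A_m$ for small $\lambda$, expand the energy to second order, use \eqref{fv} together with $\int\xi_\eps=0$ to kill the linear term, and read off \eqref{sv} from the sign of the quadratic term before letting $\eps\to0$. The only cosmetic difference is that you phrase the computation via the maximization of $Q$ rather than the minimization of $\mathcal P_K$.
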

 
 \begin{proof} We argue as in \cite[Lemma 1.5]{cdnp}.   Reasoning as in the proof of Lemma \ref{variation}, item $i)$,  we can assume that
 there exists a sequence $\xi_\eps\to \xi$ in $L^1$ such that $\int_{\R^N} \xi_\eps(x)dx=0$ and $\xi_\eps\equiv 0$ a.e. in $\{x \ |\ f(x)\leq \eps \text{ or }  f(x)\geq 1-\eps\}$. So for $\lambda$
 sufficiently small $f+\lambda \xi_\eps\in\mathcal{A}_m$ and by  minimality we get
 \begin{multline*} 0\leq \mathcal{P}_K(f+\lambda\xi_\eps)- \mathcal{P}_K(f)\\ =\lambda \int_{\R^N\setminus (N\cup S)} (\|K\|_{L^1}-2V(x))
\xi_\eps (x)dx-\lambda^2\int_{\R^N} \int_{\R^N} \xi_\eps(x)\xi_\eps(y)K(x-y)dxdy.\end{multline*}
Recalling \eqref{fv} and the fact that $\int_{\R^N} \xi_\eps(x)dx=0$, we conclude the desired inequality by letting $\eps\to 0$. 
 \end{proof} 
 
\subsection{Existence of minimizers}

\begin{theorem} \label{existence} 
%Assume that $K$ satisfies the following condition:
%\begin{equation}\label{condKK}
%\lim_{|x|\to +\infty}K(x)=0.
%\end{equation}
For every $m>0$ there exists at least one $f\in \mathcal{A}_m$ 
which solves the minimization problem \eqref{min}.
\end{theorem}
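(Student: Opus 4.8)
The plan is to pass to the equivalent maximization problem \eqref{maxrel} and run a concentration--compactness argument on a maximizing sequence. I would set
\[
I(\mu):=\sup_{f\in\mathcal A_\mu}\int_{\R^N}\!\!\int_{\R^N}f(x)f(y)K(x-y)\,dx\,dy,
\]
so that by \eqref{relaxed2} minimizing \eqref{min} over $\mathcal A_m$ amounts to computing $I(m)=m\|K\|_{L^1}-\inf_{\mathcal A_m}\mathcal P_K$. One first records the elementary properties of $I$: it is finite (bounded by $\mu\|K\|_{L^1}$) and strictly positive for every $\mu>0$ (place the mass of a competitor in two small balls separated by a vector lying in the positive--measure set $\{K>0\}$, which is nonempty since $K\not\equiv 0$); by Lemma~\ref{suba} it is nondecreasing and superadditive; and it is Lipschitz, since truncating a near--optimal competitor to a subset carrying the prescribed mass changes the energy by at most $2\|K\|_{L^1}$ times the removed mass, giving $0\le I(\mu+h)-I(\mu)\le 2h\|K\|_{L^1}$.

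Next I would take $f_n\in\mathcal A_m$ with $\int\!\!\int f_n(x)f_n(y)K(x-y)\,dx\,dy\to I(m)$; since $0\le f_n\le1$ and $\|f_n\|_{L^1}=m$, Lions' concentration--compactness lemma applies to the densities $f_n$, and up to a subsequence one of vanishing, compactness, dichotomy occurs. \emph{Vanishing} ($\sup_y\int_{B(y,R)}f_n\to0$ for every $R>0$) is excluded as follows: splitting $K=K^{(1)}+K^{(2)}$ with $K^{(1)}:=\min(K,M)\chi_{B(0,R)}$ and $K^{(2)}:=K-K^{(1)}\ge0$, one has
\[
\int f_n\,(f_n*K^{(1)})\le M\Big(\sup_y\int_{B(y,R)}f_n\Big)m,\qquad
\int f_n\,(f_n*K^{(2)})\le m\,\|K^{(2)}\|_{L^1},
\]
and $\|K^{(2)}\|_{L^1}$ is as small as we like for $R,M$ large; hence $\int\!\!\int f_n f_nK\to0$, contradicting $I(m)>0$.

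In the \emph{compactness} case there are $y_n$ such that $g_n:=f_n(\cdot+y_n)$ is tight, i.e. $\sup_n\int_{B(0,R)^c}g_n\to0$ as $R\to\infty$. Up to a subsequence $g_n\stackrel{*}{\rightharpoonup}f$ in $L^\infty$, with $0\le f\le1$ and, by tightness, $\int f=m$, so $f\in\mathcal A_m$. Since $K\in L^1$, $g_n*K\to f*K$ pointwise and, again using tightness, in $L^1$; combining this with $g_n\stackrel{*}{\rightharpoonup}f$ and $f*K\in L^1$ yields $\int\!\!\int g_n g_nK\to\int\!\!\int ff K$. As the energy is translation invariant this gives $\int\!\!\int ff K=I(m)$, so $f$ solves \eqref{min}.

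It remains to rule out \emph{dichotomy}, and this is where I expect the main difficulty. Dichotomy provides $m=m_1+m_2$ with $0<m_1,m_2<m$; splitting $f_n$ into two pieces whose supports go infinitely far apart (so that, as $K\in L^1$, their mutual interaction vanishes) and passing to the limit one gets $I(m)=I(m_1)+I(m_2)$, combined with superadditivity from Lemma~\ref{suba}(ii). To reach a contradiction I would use the \emph{strict} form of superadditivity, $I(m_1)+I(m_2)<I(m_1+m_2)$ (equivalently, strict subadditivity of $\inf\mathcal P_K$). If the suprema $I(m_1),I(m_2)$ are attained, say by $g_1,g_2$, then by Proposition~\ref{compact} we may take $g_1,g_2$ with compact support, and choosing $x_0$ so that $\mathrm{supp}\,g_1$ and $x_0+\mathrm{supp}\,g_2$ are disjoint while $\int\!\!\int g_1(x)g_2(y)K(x-y-x_0)\,dx\,dy>0$ --- which is possible because the average of this integral over $x_0\in\R^N$ equals $m_1m_2\|K\|_{L^1}>0$ while the ``forbidden'' set of $x_0$ is bounded --- the competitor $g_1+g_2(\cdot-x_0)\in\mathcal A_m$ has energy strictly above $I(m_1)+I(m_2)$, which is precisely the strict inequality in Lemma~\ref{suba}(ii). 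The delicate remaining point is that $I(m_1),I(m_2)$ \emph{are} attained: one reruns the trichotomy at the smaller masses $m_1,m_2$ (vanishing again impossible, compactness yielding attainment, dichotomy splitting the mass further) and must show the recursion terminates, so that $m$ is eventually expressed as a sum of at least two positive masses at which the supremum is attained; iterating the strict superadditivity then contradicts $I(m)=\sum_i I(m_i)$. Controlling this termination is the heart of the argument.
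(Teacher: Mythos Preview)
Your treatment of vanishing and compactness is correct, and the overall strategy --- pass to the maximization \eqref{maxrel} and run a concentration argument --- is the same as the paper's. The genuine gap is exactly where you flag it: ruling out dichotomy by a recursion whose termination you do not prove. Lions' dichotomy at mass $m$ only produces a \emph{maximizing sequence} at each of the masses $m_1,m_2$, not a maximizer; so the strict part of Lemma~\ref{suba}(ii), which you correctly note requires attainment, is not yet available. Recursing the trichotomy on $m_1,m_2$ may split the mass again, and nothing prevents an infinite cascade $m=\sum_j\mu_j$ of ever smaller pieces with no single $\mu_j$ on which the supremum is known to be reached. Without a lower bound on the mass of each dichotomy piece (which Lions does not give) there is no reason the recursion terminates, and your argument stops here.

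The paper avoids this difficulty by not using the trichotomy at all: instead it tiles $\R^N$ by unit cubes and, for each $\eps>0$, separates the at most $m/\eps$ cubes carrying mass $>\eps$ from the rest. A Riesz-rearrangement estimate shows the low-mass region contributes $o(1)$ to the energy as $\eps\to0$, which is the analogue of excluding vanishing. The finitely many high-mass cubes are grouped into clusters according to whether their mutual distances stay bounded, and on each cluster one passes to a weak$^*$ limit $f^l$ (the clusters have uniformly bounded diameter, so this is tight). Letting $\eps\downarrow0$ monotonically yields an at most countable family $(f^l)_l$ with $\sum_l m_l\le m$ and, because the full chain of inequalities coming from Lemma~\ref{suba} collapses to equalities, \emph{each $f^l$ is automatically a maximizer at its own mass $m_l$}. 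Now the strict inequality in Lemma~\ref{suba}(ii) applies directly to any two of the $f^l$'s, forcing $m_l=0$ for all but one $l$. The key idea you are missing is thus to extract \emph{all} the profiles simultaneously (via the cube decomposition) so that attainment at each piece comes for free, instead of recursively splitting and hoping the process stops.
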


\begin{proof}
The proof is similar to that  in \cite[Theorem 1.9]{cdnp} (see also \cite{cn}), and
is based on a concentration compactness argument. 

Let $f_n\in\mathcal{A}_m$ be a minimizing sequence, and 
recall that the energy can be written as
\[\mathcal{P}_K(f_n)=m\|K\|_{L^1}-\int_{\R^N} \int_{\R^N} f_n(x)f_n(y) K(x-y)dxdy.\] 
%where $V_n$ is the potential of $f_n$.  

We consider a partition of $\R^N$ in disjoint cubes: let $Q=[0,1]^n$ and let $Q^z:=z+Q$ for $z\in \Z^N$.
We fix $\eps>0$ small and divide the cubes in two subsets:  
\[I_{\eps,n}:=\{z\in\Z^N\ |\ \int_{Q^z} f_n(x)dx=:m_{n,z}\leq \eps\},\qquad A_{\eps,n}:=\cup_{z\in I_{\eps, n}} Q^z\] and \[J_{\eps,n}:=\{z\in\Z^N\ |\ \int_{Q^z} f_n(x)dx=m_{n,z}>\eps\},
\qquad E_{\eps,n}:=\cup_{z\in J_{\eps, n}} Q^z.\]

%We recall that  $B_{m_{n,z}}(x)$ is the ball centered at $x$ with volume $m_{z,n}$.  
For $z\in I_{\eps,n}$ and $w\in \Z^N$, recalling Riesz rearrangement inequality,
we have
\begin{multline}\label{primo}\int_{Q^z}\int_{Q^w} f_n(x)f_n(y) K(x-y)dxdy \leq \int_{B_{m_{n,w}}}\int_{B_{m_{n,z}}} K^\star(x-y)dxdy\\= \int_{B_{m_{n,w}}}\int_{B_{m_{n,z}}(x)} K^\star(y)dydx\leq m_{n,w}\int_{B_{m_{n,z}}} K^\star(y)dy\leq m_{n,w}\int_{B_\eps} K^\star(y)dy,\end{multline} where the last inequality follows from the fact that $z\in I_{\eps,n}$, and the 
previous inequality from the fact that $K^\star$ is symmetrically decreasing. 

For $R>0$, we compute
\begin{multline*}
\int_{A_{\eps, n}}\int_{\R^N} f_n(x)f_n(y) K(x-y)dxdy\\=\sum_{w\in \Z^N, z\in I_{\eps, n}, |z-w|>R} \int_{Q^z}\int_{Q^w} f_n(x)f_n(y) K(x-y)dxdy\\
+\sum_{w\in \Z^N, z\in I_{\eps, n}, |z-w|\leq R} \int_{Q^z}\int_{Q^w} f_n(x)f_n(y) K(x-y)dxdy.
\end{multline*} 
By \eqref{primo} the second addendum can be bounded as follows 
\begin{multline*} \sum_{w\in \Z^N, z\in I_{\eps, n}, |z-w|\leq R} \int_{Q^z}\int_{Q^w} f_n(x)f_n(y) K(x-y)dxdy\\\leq \sum_{w\in \Z^N} m_{n,w}(2R)^N \int_{B_\eps} K^\star(y)dy
\leq m (2R)^N \int_{B_\eps} K^\star(y)dy. \end{multline*}
On the other hand the first addendum can be bounded as 
\begin{multline*} \sum_{w\in \Z^N, z\in I_{\eps, n}, |z-w|>R} \int_{Q^z}\int_{Q^w} f_n(x)f_n(y) 
K(x-y)dxdy\\\leq  \sum_{z\in I_{\eps, n}}m_{z,n}\int_{|y|>R-\sqrt{N}}    K(y) dy\leq 
m\int_{|y|>R-\sqrt{N}}  K(y) dy.\end{multline*}
Collecting the two estimates, we get
\begin{equation}\label{eqRR}
\int_{A_{\eps, n}}\int_{\R^N} f_n(x)f_n(y) K(x-y)dxdy\\\le
m (2R)^N \int_{B_\eps} K^\star(y)dy + m\int_{|y|>R-\sqrt{N}}  K(y) dy.
\end{equation}
Since $K\in L^1$, hence also $K^\star \in L^1$, we can choose $R=R(\eps)$ in such a way that 
$$
\lim_{\eps\to 0}R(\eps)=+\infty \quad \text{ and }\quad
\lim_{\eps\to 0}R(\eps)^N\int_{B_\eps} K^\star(y)dy =0.
$$
With this choice of $R$, from \eqref{eqRR} we get
%In conclusion we get that 
\begin{equation}\label{aeps} \int_{A_{\eps, n}}\int_{\R^N} f_n(x)f_n(y) K(x-y)dxdy\leq r(\eps)\end{equation} where $r(\eps)\to 0$ as $\eps\to 0$, uniformly in $n$. 
As a consequence, we obtain
\begin{equation}\label{enuova} 
\mathcal{P}_K(f_n)\geq  m\|K\|_{L^1}- \int_{E_{\eps,n}}\int_{E_{\eps,n}} f_n(x)f_n(y) K(x,y)dxdy- 2r(\eps). 
 \end{equation} 
 
Observe that, due to the fact that $\int_{\R^N} f(x)dx=m$, 
we have $\# J_{\eps,n}\leq m/\eps$. 
Given $w_i, w_j\in J_{\eps,n}$, up to subsequence we get that $|w_i-w_j|\to  c_{i,j}\in\N\cup \{+\infty\}$ as $n\to +\infty$. We consider the 
following sets, for $l=1, \dots, H_\eps$, with $H_\eps\leq \frac{m}{\eps}$ 
\begin{equation}\label{clu} \mathcal{Q}_{\eps,n}^l= \bigcup_{w_{i}\in J_{\eps, n}, \ c_{il}<+\infty} Q^{w_{i}}.\end{equation}
Note that by construction $dist(\mathcal{Q}_{\eps,n}^l, \mathcal{Q}_{\eps,n}^k)\to +\infty$ if $k\neq l$ as $n\to +\infty$. Moreover, always by construction, we get that
$$diam(\mathcal{Q}_{\eps,n}^l)\leq \sum_{i\in \{1, \dots H_\eps\}, c_{il}<\infty} (2c_{il}+2 \sqrt{N})\leq M_\eps,$$ 
where $M_\eps$ does not depend on $n$.

Let $f_n^{l,\eps}:=f_n \chi_{\mathcal{Q}_{\eps,n}^l}$ and let $x_{l,n}$ such that $f_n^{l,\eps}(x_{l,n})>0$. Up to subsequences we can assume that $f_n^{l,\eps}(\cdot+x_{n,l})\stackrel{*}\rightharpoonup f^{l,\eps}$ weakly$^*$  in $L^\infty$, as $n\to +\infty$. 
Observe that the support of $ f_n^{l,\eps}(\cdot+x_{n,l})$ is contained in $B(0,M_\eps)$ for every $n$. 
Moreover, since the functional $\mathcal{P}_K$ is continuous with respect to the tight convergence (see for instance \cite{cdnp}), we get that
\begin{multline*} \lim_n\int_{\R^N}\!\!\int_{\R^N}f_n^{l,\eps}(x+x_{n,l})f_n^{l,\eps}(y+x_{n,l})K(x-y)dxdy\\
= \int_{\R^N}\!\!\int_{\R^N}f^{l,\eps}(x)f^{l,\eps}(y)K(x-y)dxdy.\end{multline*} 
Therefore 
\begin{multline}\label{enuova1} \inf_{\mathcal{A}_m}\mathcal{P}_K= \lim_n\mathcal{P}_K(f_n)\geq m\|K\|_{L^1} - \lim_n  \int_{E_{\eps,n}}\int_{E_{\eps,n}}f_n (x)f_n (y)K(x-y)dxdy-2r(\eps)\\
=  m\|K\|_{L^1} - \sum_{l=1}^{H_\eps} \lim_n  \int_{\R^N}\!\!\int_{\R^N}f_n^{l,\eps}(x+x_{n,l})f_n^{l,\eps}(y+x_{n,l})K(x-y)dxdy-2r(\eps)\\
=  m\|K\|_{L^1} - \sum_{l=1}^{H_\eps} \int_{\R^N}\!\!\int_{\R^N}f^{l,\eps}(x)f^{l,\eps}(y)K(x-y)dxdy-2r(\eps).\end{multline} 

We pass to a subsequence $\eps_k\to 0$ such that $\eps_k$ is decreasing. So $H_{\eps_k}\to H\in(0, +\infty]$. Moreover, we can relabel the sequence in such a way that 
$f_n^{l,\eps_k}$ and then also their limit $f^{l,\eps_k}$ are monotone in $\eps_k$. By monotone convergence $f^{l,\eps_k}\to f^l$ strongly in $L^1$. Moreover if  $m_l=\int_{\R^N} f^l(x)dx$, then $\sum_{l=1}^H m_l=\tilde m\leq m$. Again by continuity of the functional with respect to the
$L^1$-convergence,  from \eqref{enuova1} and from Lemma \ref{suba} we get that
\begin{multline*} \sup_{f\in \mathcal{A}_m}\int_{\R^N}\!\!\int_{\R^N} f(x)f(y)K(x-y)dxdy\\
= \lim_n\int_{\R^N}\!\!\int_{\R^N} f_n(x)f_n(y)K(x-y)dxdy)\leq  \sum_{l=1}^{H } \int_{\R^N}\!\!\int_{\R^N}f^{l}(x)f^{l}(y)K(x-y)dxdy\\
\leq \sum_{l=1}^{H }\sup_{\mathcal{A}_{m_l}}\int_{\R^N}\!\!\int_{\R^N} f(x)f(y)K(x-y)dxdy\leq \sup_{\mathcal{A}_{\tilde m}}\int_{\R^N}\!\!\int_{\R^N} f(x)f(y)K(x-y)dxdy\\\leq  \sup_{\mathcal{A}_{m}}\int_{\R^N}\!\!\int_{\R^N} f(x)f(y)K(x-y)dxdy.
  \end{multline*}
  Therefore the previous are all equalities, and  $f^l$  is a minimizer  of $\mathcal{P}_K$ 
  in $\mathcal{A}_{m_l}$ for all $l$'s. 
  In particular, recalling again Lemma \ref{suba}, 
  we get that $H=1$, and  $f^1$ 
  is a minimizer  of $\mathcal{P}_K$  in $\mathcal{A}_{m}$. 
\end{proof} 

We finally show  that, under a further condition on $K$, the isoperimetric problem \eqref{isopb} admits a solution. 
 
\begin{theorem} \label{existenceiso} 
Assume  that for a.e. $x\in\R^N$ there exists $\eps_x>0$ such that  for all $\eps<\eps_x$ 
\begin{equation}\label{pos} 
\int_{B(0,2\eps)} |B(0,\eps)\cap B(z,\eps)| ( K(z)-K(x+z)) dz>0.
 \end{equation} 
Then, for every $m>0$ there exists a compact set $E\subseteq \R^N$ such that $|E|=m$ and  
$E$ solves the isoperimetric problem \eqref{isopb}.
\end{theorem}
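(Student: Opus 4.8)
The plan is to show that any minimizer $f\in\mathcal{A}_m$ of the relaxed problem \eqref{min} — which exists by Theorem \ref{existence} — is in fact a characteristic function $\chi_E$ of a set $E$, which must then be compact by Proposition \ref{compact}; minimality of $f$ among densities then forces $E$ to minimize $\Per_K$ among sets of volume $m$. So the whole content is to rule out that $f$ takes values strictly between $0$ and $1$ on a set of positive measure, i.e. to show that the set $\Omega:=\{x\ |\ 0<f(x)<1\}=\R^N\setminus(N\cup S)$ is null, where $N,S$ are as in Lemma \ref{variation}. I would argue by contradiction, assuming $|\Omega|>0$.

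The key tool is the second variation inequality \eqref{sv}: for every $\xi\in L^1(\R^N,[-1,1])$ with $\int_{\R^N}\xi\,dx=0$ and $\xi\equiv 0$ a.e.\ outside $\Omega$, one has $\int\!\int \xi(x)\xi(y)K(x-y)\,dx\,dy\le 0$. The strategy is to produce such a test function $\xi$ violating this inequality, using the structural positivity assumption \eqref{pos}. Since $|\Omega|>0$, by the Lebesgue density theorem almost every point of $\Omega$ is a density point, and I can pick a density point $x_0\in\Omega$ for which \eqref{pos} holds with some $\eps_{x_0}>0$. For small $\eps$, both balls $B(x_0,\eps)$ and $B(x_0+\bar x,\eps)$ (for a suitable fixed small vector $\bar x$ to be chosen, playing the role of "$x$" in \eqref{pos}) meet $\Omega$ in sets of relative measure close to $1$. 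I would then take $\xi$ supported on $\Omega\cap(B(x_0,\eps)\cup B(x_0+\bar x,\eps))$, roughly of the form $\xi\approx\chi_{B(x_0,\eps)}-\chi_{B(x_0+\bar x,\eps)}$ restricted to $\Omega$ (and renormalized so that the integral is exactly zero and $|\xi|\le 1$). Expanding $\int\!\int\xi(x)\xi(y)K(x-y)$ into the four pieces $\int_{B(x_0,\eps)}\int_{B(x_0,\eps)}K$, $\int_{B(x_0+\bar x,\eps)}\int_{B(x_0+\bar x,\eps)}K$, and $-2\int_{B(x_0,\eps)}\int_{B(x_0+\bar x,\eps)}K$, a change of variables $z=x-y$ (resp.\ $z=x-y+\bar x$) turns the leading order term, after dividing by the appropriate power of $\eps$, into exactly the integral appearing in \eqref{pos} with $x=-\bar x$ (using $K(-z)=K(z)$), namely $\int_{B(0,2\eps)}|B(0,\eps)\cap B(z,\eps)|\,(K(z)-K(\bar x+z))\,dz$, which is strictly positive. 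The density-point condition ensures the error from replacing $\chi_{B(x_0,\eps)\cap\Omega}$ by $\chi_{B(x_0,\eps)}$ (and the renormalization corrections) is of lower order in $\eps$, so for $\eps$ small enough $\int\!\int\xi(x)\xi(y)K(x-y)>0$, contradicting \eqref{sv}. Hence $|\Omega|=0$.

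Once $|\Omega|=0$ we have $f=\chi_E$ with $E=S$ (up to a null set) and $|E|=m$; Proposition \ref{compact} gives that $E$ is (equivalent to) a bounded set, and replacing it by a representative we may take $E$ compact. Finally $\Per_K(E)=\mathcal{P}_K(\chi_E)=\min_{\mathcal{A}_m}\mathcal{P}_K\le\inf\{\Per_K(F)\ |\ |F|=m\}$, and since $\chi_F\in\mathcal{A}_m$ for every admissible $F$ the reverse inequality is trivial, so $E$ solves \eqref{isopb}.

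The main obstacle is the second step: constructing the competitor $\xi$ so that (a) it is admissible for \eqref{sv} — which requires $\int\xi=0$, $|\xi|\le 1$, and support inside $\Omega$, the last being the delicate point since $\Omega$ need not contain any ball — and (b) its energy is positive. Handling (a) is exactly where the density-point choice of $x_0$ enters: one works with $\xi$ supported on $\Omega\cap B(x_0,\eps)$ and $\Omega\cap B(x_0+\bar x,\eps)$, estimates the measures of these intersections as $(1-o(1))|B(0,\eps)|$, and absorbs the discrepancy and the zero-mean renormalization into an error that is controlled by $o(\eps^{N})$ times a bound on $K$ near the relevant scale — here one uses $K\in L^1$ and the local structure to make the error genuinely lower order than the positive main term coming from \eqref{pos}. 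One should also check the borderline case where $\bar x$ cannot be chosen with $B(x_0+\bar x,\eps)$ meeting $\Omega$ densely; but since $x_0$ is a density point of $\Omega$, for $|\bar x|$ comparable to $\eps$ the ball $B(x_0+\bar x,\eps)$ still captures a definite fraction of $\Omega$, which suffices after adjusting the construction.
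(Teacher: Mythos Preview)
Your overall strategy is the paper's: take a relaxed minimizer $f$ (Theorem~\ref{existence}), assume $\Omega=\{0<f<1\}$ has positive measure, and contradict the second-variation inequality \eqref{sv} with a test function of the shape $\chi_{B_1}-\chi_{B_2}$, whose quadratic energy reduces to the integral in \eqref{pos}. The passage from $f=\chi_E$ to a compact minimizer via Proposition~\ref{compact} and the final identification with \eqref{isopb} are fine.

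The gap is in how you place the two balls. You anchor everything at a \emph{single} density point $x_0\in\Omega$ and then look for a translate $x_0+\bar x$. For a \emph{fixed} $\bar x\neq 0$ there is no reason $B(x_0+\bar x,\eps)$ should lie mostly in $\Omega$: $\Omega$ might be concentrated near $x_0$. Your fallback, taking $|\bar x|$ comparable to $\eps$, does keep the second ball mostly inside $\Omega$ by density, but then $\bar x\to 0$ as $\eps\to 0$, and hypothesis \eqref{pos} gives no information at $x=0$ (the integrand is identically zero there). You therefore cannot extract a uniform positive lower bound from \eqref{pos} along that sequence, and the contradiction collapses. (There is also a small notational slip: \eqref{pos} is a condition on the translation vector, not on the base point $x_0$, so ``$x_0$ for which \eqref{pos} holds with some $\eps_{x_0}$'' does not parse.)

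The fix, and this is exactly what the paper does, is to choose \emph{two} Lebesgue points $\bar x\neq\bar y$ of $f$ with $f(\bar x),f(\bar y)\in(0,1)$; since $|\Omega|>0$ such a pair exists, and by a Fubini argument one may choose it so that \eqref{pos} holds at the fixed nonzero vector $\bar x-\bar y$. With $\xi=\chi_{B(\bar x,\eps)}-\chi_{B(\bar y,\eps)}$ and $\eps<\tfrac12|\bar x-\bar y|$ the balls are disjoint and the energy computes exactly to
\[
2\int_{B(0,2\eps)}|B(0,\eps)\cap B(z,\eps)|\bigl(K(z)-K(\bar x-\bar y+z)\bigr)\,dz>0,
\]
contradicting \eqref{sv}. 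Both centers are then density points of $\Omega$, so the support and zero-mean renormalization issues you carefully flag are genuinely lower order, just as you argue; the paper in fact applies \eqref{sv} directly to the unrestricted $\xi$ and suppresses this step.
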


\begin{proof}  By Theorem \ref{existence} there exists at least one $f\in \mathcal{A}_m$ which solves the minimization problem \eqref{min}. 
Moreover the support of $f$ is compact due to  Proposition \ref{compact}. 

Assume by contradiction that $f$  is not a characteristic function. Then there exist $\bar x\neq \bar y$ Lebesgue points of $f$  
such that $0<f(\bar x), f(\bar y)<1$. Let $\eps<\frac{1}{2} |\bar x-\bar y|$ and define the function $\xi(x):=\chi_{B(\bar x,\eps)}-\chi_{B(\bar y,\eps)}$. Therefore, by the  second variation formula \eqref{sv}, for every $\eps<\frac{1}{2}|\bar x-\bar y|$ we get 
\begin{multline*}  0\geq \int_{\R^N}\!\!\int_{\R^N} \xi(y)\xi(x)K(x-y)dxdy\\ =2\int_{B(0,\eps)}\int_{B(0,\eps)} (K(x-y)-K(\bar x-\bar y-(x-y))dxdy\\= 2\int_{B(0,2\eps)}|B(0,\eps)\cap B(z,\eps)|(K(z)-K(\bar x-\bar y+ z))dz, \end{multline*} 
which contradicts \eqref{pos}, and concludes the proof.
\end{proof} 

\begin{remark}\upshape A sufficient condition for \eqref{pos} to hold is that 
\begin{equation}\label{max} 
\liminf_{z\to 0}\,(K(z)-K(z+x))>0 \qquad \text{for a.e. }x\in \R^N.
\end{equation} 
In particular this condition is always verified if $K$ is positive definite, that is, 
\[ \begin{cases} \int_{\R^N}\!\!\int_{\R^N} \phi(x)\phi(y)K(x-y)dxdy\geq 0  & \forall \phi \in L^1(\R^N)\\ 
 \int_{\R^N}\!\!\int_{\R^N} \phi(x)\phi(y)K(x-y)dxdy= 0 & \text{ iff }\phi\equiv 0.\end{cases}\]
\end{remark}

%%%%%%%%%%%%%%%%%%%%%%%%%%%%%%%%%%%%%%%%%%%%%%%%


\begin{thebibliography}{99}

\bibitem{afp} \newblock L. Ambrosio, N. Fusco, D. Pallara.
\newblock  \textit{Functions of Bounded Variation and Free Discontinuity Problems}
\newblock  Oxford Mathematical Monographs, 2000. 

\bibitem{crs} \newblock L. A. Caffarelli, J.-M. Roquejoffre, O. Savin.
\newblock Nonlocal minimal surfaces. 
\newblock \emph{Comm. Pure Appl. Math.}, 63 (2010), no. 9, 1111--1144. 
 
\bibitem{cdnv}
\newblock A. Cesaroni, S. Dipierro, M. Novaga, E. Valdinoci.
\newblock  Minimizers for nonlocal perimeters of Minkowski type.
\newblock  \emph{Arxiv preprint} 2017, https://arxiv.org/abs/1704.03195.

\bibitem{cn}
\newblock A. Cesaroni, M. Novaga.
\newblock  Volume constrained minimizers of the fractional perimeter with a potential energy.
\newblock  \emph{Discrete Contin. Dyn. Syst. S}, 4 (2017), no. 10, 715--727.

\bibitem{cmp1}
\newblock A. Chambolle, M. Morini, M. Ponsiglione.
\newblock  Nonlocal curvature flows. 
\newblock \emph{Arch. Ration. Mech. Anal.}, 218 (2015), no. 3, 1263--1329. 

\bibitem{cdnp} 
\newblock M. Cicalese, L. De Luca, M. Novaga, M. Ponsiglione.
\newblock Ground states of a two phase model with cross and self attractive interactions.
\newblock \emph{SIAM J. Math. Anal.}, 48 (2016), no. 5, 3412--3443.

\bibitem{csv} 
\newblock E. Cinti, J. Serra, E. Valdinoci.
\newblock Quantitative flatness results and BV-estimates for stable nonlocal minimal surfaces.
\newblock \emph{Arxiv preprint} 2016, https://arxiv.org/abs/1602.00540.

\bibitem{dnrv} 
\newblock A. Di Castro, M. Novaga, B. Ruffini, E.Valdinoci.
\newblock Nonlocal quantitative isoperimetric inequalities. 
\newblock \emph{Calc. Var. Partial Differential Equations}, 54 (2015), no. 3, 2421--2464. 

\bibitem{ffmmm}
\newblock A. Figalli,  N. Fusco, F. Maggi, V. Millot, M. Morini.
\newblock Isoperimetry and stability properties of balls with respect to nonlocal energies. 
\newblock \emph{Comm. Math. Phys.}, 336 (2015), no. 1, 441--507.


\bibitem{gn}
\newblock M. Goldman, M. Novaga.
\newblock Volume-constrained minimizers for the prescribed curvature problem in periodic media. 
\newblock  \emph{Calc. Var. Partial Differential Equations}, 44 (2012), no. 3-4, 297--318.
 
\bibitem{l}
\newblock M. Ludwig.
\newblock Anisotropic fractional perimeters. 
\newblock \emph{J. Differential Geom.}, 96 (2014), no. 1, 77--93.

\bibitem{maggibook}
\newblock F. Maggi. 
\newblock \textit{Sets of finite perimeter and geometric variational problems. In: An introduction to 
Geometric Measure Theory.}
\newblock Cambridge Studies in Adavanced Mathematics, vol. 135, Cambridge University Press, Cambridge, 2012.

%\bibitem{mrt} 
%\newblock J. M. Mazon, J. D. Rossi, J. Toledo. 
%\newblock Nonlocal perimeter, curvature and minimal surfaces for measurable sets. 
%\newblock To appear in  \emph{J. Anal. Math.}

\bibitem{m} 
\newblock V. Maz'ya.
\newblock  { Lectures on isoperimetric and isocapacitary inequalities in the theory of Sobolev spaces. }
\newblock Contemp. Math., 338, Amer. Math. Soc., Providence, RI, 2003. 

\bibitem{riesz} 
\newblock F. Riesz. 
\newblock {Sur une in\'egalit\'e int\'egrale. Journ.} 
\newblock \emph{London Math. Soc.}, 5 (1930),162--168.

\bibitem{v}
\newblock A. Visintin.
\newblock Generalized coarea formula and fractal sets. 
\newblock \emph{Japan J. Indust. Appl. Math.} 8 (1991), 175--201.

\end{thebibliography}
\end{document}